\newcommand{\Grass}[2]{\mathrm{Grass}(#1,#2)}
\def \Grasspn {\mathrm{Grass}(p,n)}
\newcommand{\ST}[2]{\mathbb{C}_*^{#2\times #1}}
\def \STpn {\mathbb{C}_*^{n\times p}}
\newcommand{\eqcllo}[1]{\lfloor #1 \rfloor}
\newcommand{\ecl}[1]{\lfloor #1 \rfloor}
\newcommand{\eclbig}[1]{\left\lfloor #1 \right\rfloor}
\def \rr {\mathbb{R}}
\def \cc {\mathbb{C}}
\def \bbRP {\mathbb{RP}}
\def \bbP {\mathbb{P}}
\def \calD {{\cal{D}}}
\def \calN {{\cal{N}}}
\def \calS {{\cal{S}}}
\def \calY {{\cal{Y}}}
\def \calX {{\cal{X}}}
\def \calV {{\cal{V}}}
\def \rank {\mathrm{rank}}
\def \dist {\mathrm{dist}}
\def \T {\boldsymbol{T}}
\def \diag {\mathrm{diag}}
\def \sep {\mathrm{sep}}
\newcommand{\pacomm}[1]{}
\newcommand{\tech}[1]{}
\newenvironment{proof}{{\it \noindent Proof.}}{\hspace{\stretch{1}} $\square$}
\newtheorem{thrm}{Theorem}[section]
\newtheorem{lmm}[thrm]{Lemma}
\newtheorem{prpstn}[thrm]{Proposition}
\newtheorem{lgrthm}[thrm]{Algorithm}
\newtheorem{dfntn}[thrm]{Definition}
\title{Two-sided Grassmann-Rayleigh quotient iteration\footnotemark[1]}
\author{P.-A. Absil\footnotemark[2]\and P.
Van Dooren\footnotemark[2]}
\date{\tt Submitted 
for publication on 19 Apr 2007}
\begin{document}

\maketitle



\renewcommand{\thefootnote}{\fnsymbol{footnote}} \footnotetext[1]{This
  paper presents research results of the Belgian Network DYSCO
  (Dynamical Systems, Control, and Optimization), funded by the
  Interuniversity Attraction Poles Programme, initiated by the
  Belgian State, Science Policy Office. The scientific responsibility
  rests with its authors. 
This work was supported by the US National
Science Foundation under Grant OCI-0324944
and by the School of Computational Science of Florida State University
through a postdoctoral fellowship.}
\footnotetext[2]{Department of Mathematical Engineering, Universit\'e
  catholique de Louvain, Avenue Georges
  Lema\^{\i}tre 4, B-1348 Louvain-la-Neuve, Belgium ({\tt
    http://www.inma.ucl.ac.be/\{$\sim$absil,$\sim$vdooren\}}).}
\renewcommand{\thefootnote}{\arabic{footnote}}



\begin{abstract}
The two-sided Rayleigh quotient iteration proposed by Ostrowski computes
a pair of corresponding left-right eigenvectors of a matrix $C$. We
propose a Grassmannian version of this iteration, i.e., its iterates are
pairs of $p$-dimensional subspaces instead of one-dimensional subspaces
in the classical case. The new iteration generically converges locally
cubically to the pairs of left-right $p$-dimensional invariant subspaces
of $C$. Moreover, Grassmannian versions of the Rayleigh quotient
iteration are given for the generalized Hermitian eigenproblem, the
Hamiltonian eigenproblem and the skew-Hamiltonian eigenproblem.
\end{abstract}

\noindent {\bf Keywords.} Block Rayleigh quotient iteration, two-sided
iteration, Grassmann manifold, generalized eigenproblem,
Hamiltonian eigenproblem.

\noindent
{\bf AMS subject classification.} 65F15

\pacomm{STILL TO DO:

Give an application related to the computation of the H-infty
norm. References: \cite{GenDooVer1998,SreDooTit1996}. The idea is
roughly as follows. We have a Hamiltonian matrix formed from the
matrices $A$, $B$, $C$ of an LTI system and a parameter
$\gamma$. Generically, for $\gamma$ smaller than and sufficiently
close to the H-infty norm of the system, the Hamiltonian has four
eigenvalues (two complex conjugate pairs) on the imaginary
axis. Compute these eigenvalues (the two upper ones on the imaginary
axis), define the new $\gamma$ as their mean, and repeat. Once the
computation has been done for the first $\gamma$, the subsequent
computation amount to refining a four-dimensional eigenspace, which
moreover is a \emph{full} eigenspace, as defined in this paper---the
spectrum related to the eigenspace is symmetric wrt the origin. This
is precisely the case that the two-sided GRQI can tackle. An instance
of such an H-infty computation problem can be found
in~\cite{SreDooTit1996}.

Even better, the two-sided GRQI also computes the eigenvectors, which
makes it possible to compute the slope of the curve
$\sigma_{\max}(G(j\omega))$ at the intersection points, which yields a
better approximation than bisection.
}


\section{Introduction}

The Rayleigh quotient iteration (RQI) is a classical method for
computing eigenvectors of a Hermitian matrix
\textcolor{black}{$A=A^H$}~\cite{Par74,Par98}. The RQI is a particular
inverse iteration~\cite{Ips97} where the shift is the Rayleigh
quotient evaluated at the current iterate. The Rayleigh quotient is an
efficient shift because in a neighborhood of any eigenvector of $A$ it
yields a quadratic approximation of the corresponding eigenvalue. This
remarkable property endows the iteration with cubic rate of convergence to
the eigenvectors of $A$ (see~\cite{Par74,Par98} or the sketch of proof
in~\cite{AMSV2002-01}). Thanks to its fast convergence, the RQI is
particularly efficient for refining estimates of eigenvectors.

In some cases, one has to refine an estimate of a $p$-dimensional
invariant subspace (or \emph{eigenspace}) of $A$. A reason
for considering an eigenspace instead of individual eigenvectors may
be that the eigenvectors themselves are ill-conditioned while the
subspace is not (see e.g.~\cite{Ste73}) or just because they are not
relevant for the application. Several methods have been proposed for
refining invariant subspace estimates. A quadratically convergent
iteration for refining eigenspaces of arbitrary (possibly
non-Hermitian) matrices was proposed by Chatelin~\cite{Cha84,Dem87}. It
uses Newton's method for solving the Riccati equation obtained by
expressing the eigenproblem in inhomogeneous coordinates. Similar
Newton-like iterations for eigenspace refinement were obtained using a
differential-geometric approach~\cite{EAS98,LE2002,AMS2004-01};
see~\cite{AMS-book-u} for an overview.  
In~\cite{Smi97,AMSV2002-01}, it was shown that the RQI, originally
defined on the set of one-dimensional subspaces of $\rr^n$, can be
generalized to operate on the set of $p$-dimensional subspaces of
$\rr^n$. The generalized iteration, called block-RQI or Grassmann-RQI
(because the set of the $p$-dimensional subspaces of $\rr^n$ is termed
a \emph{Grassmann manifold}) converges locally cubically to the
$p$-dimensional eigenspaces of $A=A^H$.

It is natural to ask whether the Grassmann-RQI method can be adapted to deal
with non-Hermitian matrices. This is the topic of the present paper.

The underlying idea comes from Ostrowski's series of papers dedicated
to the RQI~\cite{Ost58-x}. Let $C$ be a nonnormal matrix. Then the
quadratic approximation property of the Rayleigh quotient is lost (and
moreover the global convergence properties of the RQI become
weaker, see~\cite{BS90}). This drawback was avoided by
Ostrowski~\cite{Ost58-3,Par74} by considering the bilateral Rayleigh
quotient $\rho(y_L,y_R):= y_L^HCy_R/y_L^Hy_R$ which displays the
quadratic property in the neighborhood of the pairs of left-right
nondefective eigenvectors of $C$. Using this Rayleigh quotient as a
shift, he derived a two-sided iteration (see Algorithm~\ref{al:2sRQI}
below) that operates on pairs of vectors (or pairs of one-dimensional
subspaces, since the norm is irrelevant) and aims at converging to
pairs of left-right eigenvectors of $C$. The rate of convergence is in
cubic in nondegenerate cases. The possibility of solving the two-sided
RQI equations approximately was investigated in~\cite{HS2003}. 

In the present paper, we generalize Ostrowski's two-sided RQI to operate on
pairs of $p$-dimensional subspaces (instead of one-dimensional subspaces in the
original iteration). The new iteration, called \emph{Two-Sided Grassmann-RQI}
(\emph{2sGRQI}), converges locally cubically to the pairs of
left-right $p$-dimensional eigenspaces of $C$ (see
Section~\ref{sec:cubic}).
%
%
Comparison between Chatelin's iteration and the 2sGRQI
(Section~\ref{sec:comparisons}) shows that each method has its
advantages and drawbacks. Main advantages of the 2sGRQI over
Chatelin's iteration are the higher rate of convergence, the
simultaneous computation of left and right eigenspaces, and the
simpler structure of the Sylvester equations. On the other hand, the
2sGRQI does not behave satisfactorily when $C$ is defective and it
involves two Sylvester equations instead of one.
%
We also show that in some structured eigenproblems, namely
$E$-(skew-)Hermitian matrices with $E=\pm E^H$, a relation
$\calY_L=E\calY_R$ between left and right subspaces is invariant by
the 2sGRQI mapping (Section~\ref{sec:structured-eigenproblems}). In
particular, this observation yields a modified one-sided Grassmann-RQI
for the Hamiltonian eigenproblem. 
%
We report on numerical
experiments in Section~\ref{sec:numerical-experiments} and conclusions
are drawn in Section~\ref{sec:concl}.

\section{Preliminaries}
\label{sec:preliminaries}


This paper uses a few elementary concepts related to the algebraic
eigenvalue problem, such as principal vectors, Jordan blocks and
nonlinear elementary divisors. A classical reference
is~\cite{Wil65}. Notions of subspaces and distance between them
can be found in~\cite{Ste73}.

\pacomm{(I switched to the complex case, because I am
  using Ritz vectors to span subspaces, and Ritz vectors are
  complex in general. This means: symmetric becomes Hermitian, $^H$
  becomes $^H$, $^{-H}$ becomes $^{-H}$, $\rr$ becomes $\cc$, likewise
  for the Stiefel commands, ``real projective space'' becomes
  ``projective space'', the notation $\bbRP$ becomes $\bbP$.)}

The superscript $^H$ denotes the conjugate transpose. In accordance
with Parlett's conventions~\cite{Par74,Par98}, we try to reserve the
letter $A$ for Hermitian matrices while $C$ may denote any matrix.  We
use $\Grass{p}{n}$ to denote the Grassmann manifold of the
$p$-dimensional subspaces of $\cc^n$, $\bbP^{n-1}$ to denote the
projective space \textcolor{black}{(i.e., the set of all
  one-dimensional subspaces of $\cc^n$)}, and $\ST{p}{n}$ to denote
the noncompact Stiefel manifold, i.e., the set of $n$-by-$p$ matrices
with full rank. The space spanned by the columns of $Y\in\STpn$ is
denoted by $\ecl{Y}$ and called the \emph{span} of $Y$. The norm of a
vector $x$ is $\|x\|=\sqrt{x^Hx}$.  The spectral norm of a matrix $T$,
denoted by $\|T\|$, is the largest singular value of $T$. The
Hermitian angle $\angle(x,y)$ between two vectors $x$ and $y$ in
$\cc^n$ is given by $\cos\angle(x,y) =
\tfrac{|x^Hy|}{\|x\|\|y\|}$~\cite{Sch2001}. The angle between a vector
$y\in\cc^n$ and a subspace $\calX$ spanned by $X\in\STpn$ is
$\angle(X,y) = \min_{x\in\calX,\|x\|=1}\angle(x,y)$.  The angle
$\angle(X,Y)$ between two subspaces spanned by $X\in\STpn$ and
$Y\in\STpn$ is defined as the largest principal angle between the two
subspaces, given by $\cos\angle(X,Y) = \sigma_{\min}(\tilde X^H\tilde
Y)$ where $\tilde X$ and $\tilde Y$ are orthonormal bases for
$\ecl{X}$ and $\ecl{Y}$, respectively, and $\sigma_{\min}$ denotes the
smallest singular value~\cite{GalHeg2006}. The following proposition
is a generalization of~\cite[Th.~3.1]{AMSV2002-01} to the complex
case.
\begin{prpstn}  \label{prp:tan-delta}
Let $[X|X_\perp]$ be a unitary matrix of order $n$, with $X$ of
dimension $n\times p$, and let $K$ be an $(n-p)\times p$ matrix. Then 
\[
\tan\angle(X,X+X_\perp K) = \|K\|.
\]
\end{prpstn}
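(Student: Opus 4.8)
The plan is to reduce the statement to the defining formula $\cos\angle(X,Y)=\sigma_{\min}(\tilde X^H\tilde Y)$ by constructing explicit orthonormal bases for the two subspaces and then reading the principal angles off the singular values of $K$. First I would note that, since $[X\,|\,X_\perp]$ is unitary, the columns of $X$ are themselves orthonormal, so we may take $\tilde X=X$. Writing $Y:=X+X_\perp K$, I would check that $Y$ has full column rank: left-multiplying by the unitary $[X\,|\,X_\perp]^H$ sends $Y$ to $\begin{pmatrix}I_p\\K\end{pmatrix}$, which manifestly has rank $p$, so $Y\in\STpn$ and $\ecl{Y}$ is a genuine $p$-dimensional subspace.

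Next I would compute the relevant matrices using the orthogonality relations $X^HX=I_p$, $X^HX_\perp=0$, $X_\perp^HX_\perp=I_{n-p}$ coming from unitarity. The Gram matrix collapses to $Y^HY=I_p+K^HK$, which is Hermitian positive definite, so $\tilde Y:=Y(I_p+K^HK)^{-1/2}$ is an orthonormal basis for $\ecl{Y}$. The cross term is equally clean, since $X^HY=X^HX+X^HX_\perp K=I_p$, and therefore $\tilde X^H\tilde Y=(I_p+K^HK)^{-1/2}$.

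Finally I would translate singular values into the angle. Diagonalizing $K^HK$, its eigenvalues are the squared singular values $\sigma_i^2$ of $K$, so the singular values of $(I_p+K^HK)^{-1/2}$ are $(1+\sigma_i^2)^{-1/2}$. Hence $\cos\angle(X,Y)=\sigma_{\min}\big((I_p+K^HK)^{-1/2}\big)=(1+\|K\|^2)^{-1/2}$, and the trigonometric identity $\tan^2=\sec^2-1$ then gives $\tan\angle(X,Y)=\|K\|$, as claimed.

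The step requiring the most care is the last inversion: because the map $t\mapsto(1+t)^{-1/2}$ is decreasing, $\sigma_{\min}$ of $(I_p+K^HK)^{-1/2}$ is attained at the \emph{largest} eigenvalue of $K^HK$, so it corresponds to $\sigma_{\max}(K)=\|K\|$. Keeping this min/max bookkeeping straight is the one genuine subtlety; everything else is routine linear algebra exploiting the unitarity of $[X\,|\,X_\perp]$.
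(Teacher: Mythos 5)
Your proposal is correct and follows essentially the same route as the paper's proof: both construct the orthonormal basis $\tilde Y = (X+X_\perp K)(I+K^HK)^{-1/2}$, observe that $X^H\tilde Y = (I+K^HK)^{-1/2}$ so that $\cos\angle(X,X+X_\perp K) = (1+\|K\|^2)^{-1/2}$, and conclude via the identity $\tan^2 a = (1-\cos^2 a)/\cos^2 a$. Your added details (the full-rank check on $X+X_\perp K$ and the explicit min/max bookkeeping showing $\sigma_{\min}\bigl((I+K^HK)^{-1/2}\bigr)$ corresponds to $\sigma_{\max}(K)$) are points the paper leaves implicit, but the argument is the same.
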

\begin{proof}
The matrix $\tilde Y = (X+X_\perp K)(I+K^HK)^{-1/2}$ is an orthonormal
matrix with the same span as $X+X_\perp K$. It follows that
$\cos\angle(X,X+X_\perp K) = \sigma_{\min} (X^H\tilde Y) =
\sigma_{\min} (I+K^HK)^{-1/2} = (1+\sigma_{\max}^2(K))^{-1/2} =
(1+\|K\|^2)^{-1/2}$. The conclusion follows from the trigonometric
formula $\tan^2 a = (1-\cos^2 a)/\cos^2 a$.
\end{proof}
 
We now briefly recall some
basic facts about invariant subspaces.
\begin{dfntn}[eigenspaces]  \label{def:eigenspace}
Let $\calX$ be a $p$-dimensional subspace
of $\cc^n$ and let $X=[X_1,X_2]$ be a unitary $n\times n$
matrix such that $X_1$ spans $\calX$. Then $X^HCX$ may be
partitioned in
the form $X^HCX=\left(\begin{smallmatrix} C_{11} & C_{12} \\
C_{21} & C_{22}
\end{smallmatrix}\right)$ where $C_{11}\in\cc^{p\times p}$. The
subspace $\calX$ is an \emph{eigenspace} (or \emph{invariant
  subspace}) of $C$ if $C_{21}=0$, i.e., $C\calX \subset \calX$. By
\emph{spectrum} of $\calX$, we mean the set of eigenvalues of
$C_{11}$. We say that $\calX$ is a \emph{nondefective} invariant
subspace of $C$ if $C_{11}$ is nondefective. The invariant subspace
$\calX$ is termed \emph{spectral} if $C_{11}$ and $C_{22}$ have no
eigenvalue in common~\cite{GLR86}. The eigenspaces of $C^H$ are
called \emph{left eigenspaces} of $C$. We say that $(\calY_L,\calY_R)$
is a \emph{pair of spectral left-right eigenspaces} of $C$ if
$\calY_L$ and $\calY_R$ are spectral left and right eigenspaces of $C$
with the same spectrum.
\end{dfntn}
\pacomm{(Instead of ``spectral'' invariant subspace, we could say
``simple'' invariant subspace, as Stewart does.)}
The span of $Y\in\STpn$ is an eigenspace of $C$ if and only if
there exists a matrix $M$ such that $CY=YM$. Each spectral
eigenspace is \emph{isolated}, i.e., there exists a ball
in $\Grasspn$ centered on $\calV$ that does not contain any
eigenspace of $C$ other than $\calV$. We will also need the
following result~\cite[\S 7.6.3]{GV96}, of which we give an informative proof.
\begin{prpstn}  \label{thm:S}
If $(\calY_L,\calY_R)$ is a pair of spectral left-right
eigenspaces of $C$, then there exists an invertible matrix $S$
such that the first $p$ columns of $S$ span $\calY_R$, the first
$p$ columns of $S^{-H}$ span $\calY_L$, and
$S^{-1}CS=\left[\begin{smallmatrix} D_1 & 0 \\ 0 &
D_2\end{smallmatrix}\right]$ with $D_1\in\cc^{p\times p}$.
\end{prpstn}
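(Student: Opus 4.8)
The plan is to build $S$ directly from $\calY_R$ together with a suitable complementary $C$-invariant subspace, and then to recognize the first $p$ columns of $S^{-H}$ as a basis of $\calY_L$. Let $\Lambda$ denote the common spectrum of the pair, write $Y_R\in\STpn$ for a basis of $\calY_R$ and $Y_L\in\STpn$ for a basis of $\calY_L$, and record the defining relations $CY_R=Y_RM_1$ with $\mathrm{spec}(M_1)=\Lambda$ and the left-eigenspace relation $Y_L^HC=M_LY_L^H$ with $\mathrm{spec}(M_L)=\Lambda$.

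First I would exploit that $\calY_R$ is spectral. Taking a unitary completion of an orthonormal basis of $\calY_R$ block-triangularizes $C$ as in Definition~\ref{def:eigenspace}; because the two diagonal blocks have disjoint spectra, the off-diagonal block is annihilated by the solution of a Sylvester equation, which produces a complementary $C$-invariant subspace $\calZ_R$ of dimension $n-p$ with $\cc^n=\calY_R\oplus\calZ_R$ and $\mathrm{spec}(C|_{\calZ_R})=\Lambda^{c}$, where $\Lambda^{c}$ is the complementary part of the spectrum of $C$. Fix a basis $Z_R\in\cc^{n\times(n-p)}$ of $\calZ_R$, so that $CZ_R=Z_RM_2$ with $\mathrm{spec}(M_2)=\Lambda^{c}$.

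The main step---and the place where the hypotheses really bite---is to show $\calY_L=\calZ_R^{\perp}$, equivalently that $Y_L^HY_R$ is invertible. Here I would use biorthogonality between a left eigenspace and a right eigenspace with disjoint spectra: from $Y_L^HC=M_LY_L^H$ and $CZ_R=Z_RM_2$ one obtains
\[
M_L\,(Y_L^HZ_R)=Y_L^HCZ_R=(Y_L^HZ_R)\,M_2,
\]
a Sylvester equation in the unknown $Y_L^HZ_R$; since $\mathrm{spec}(M_L)\cap\mathrm{spec}(M_2)=\Lambda\cap\Lambda^{c}=\emptyset$, its only solution is $Y_L^HZ_R=0$. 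Thus $\calY_L\perp\calZ_R$, and comparing dimensions ($\dim\calY_L=p=n-\dim\calZ_R$) forces $\calY_L=\calZ_R^{\perp}$. In particular $\calZ_R=\calY_L^{\perp}$, so $\calY_R\cap\calY_L^{\perp}=\{0\}$ and $Y_L^HY_R$ is invertible.

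Finally I would assemble $S:=[\,Y_R\,|\,Z_R\,]$, which is invertible by the direct-sum decomposition $\cc^n=\calY_R\oplus\calZ_R$. Its first $p$ columns span $\calY_R$ by construction, and since both $\calY_R$ and $\calZ_R$ are $C$-invariant we have $CS=S\,\diag(M_1,M_2)$, i.e.\ $S^{-1}CS=\left[\begin{smallmatrix}D_1&0\\0&D_2\end{smallmatrix}\right]$ with $D_1=M_1\in\cc^{p\times p}$. Writing $S^{-1}=\left[\begin{smallmatrix}U\\V\end{smallmatrix}\right]$ with $U\in\cc^{p\times n}$, the identity $S^{-1}S=I$ gives $UZ_R=0$, so the columns of $U^H$---which are exactly the first $p$ columns of $S^{-H}$---are orthogonal to $\calZ_R=\calY_L^{\perp}$ and hence lie in $\calY_L$; as $U^H$ has full column rank $p$, these columns span $\calY_L$. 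This verifies all three requirements. I expect the Sylvester/biorthogonality argument of the third paragraph to be the only nonroutine point; the remainder is bookkeeping with the block structure of $S^{-1}$.
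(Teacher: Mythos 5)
Your proof is correct, and its first half is in substance the paper's own construction: the paper also block-triangularizes $C$ by a unitary $X=[X_1|X_2]$ with $\ecl{X_1}=\calY_R$ and solves the Sylvester equation $C_{11}L-LC_{22}=-C_{12}$, packaging the result as $S=X\left[\begin{smallmatrix} I & L \\ 0 & I\end{smallmatrix}\right]$; the trailing $n-p$ columns $X_1L+X_2$ of this $S$ span exactly your complementary invariant subspace $\calZ_R$, so your $S=[\,Y_R\,|\,Z_R\,]$ differs from the paper's only by a block-diagonal change of basis. Where you genuinely diverge is in identifying the first $p$ columns of $S^{-H}$ with $\calY_L$: the paper reads off $C^HS^{-H}=S^{-H}\left[\begin{smallmatrix} C_{11}^H & 0 \\ 0 & C_{22}^H\end{smallmatrix}\right]$ and concludes that these columns span a left eigenspace ``whose spectrum is the same as the one of $\calY_R$,'' hence equal to $\calY_L$ --- an argument that implicitly invokes the uniqueness of the spectral (left) eigenspace with a given spectrum, which the paper does not prove. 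Your second Sylvester step, $M_L(Y_L^HZ_R)=(Y_L^HZ_R)M_2$ with disjoint spectra forcing $Y_L^HZ_R=0$, establishes the identification $\calY_L=\calZ_R^\perp$ directly and so avoids any appeal to uniqueness; as a bonus it delivers the invertibility of $Y_L^HY_R$ explicitly, a fact the paper states separately (after Algorithm~\ref{al:2sGRQI}) as a consequence of this proposition. One caveat, which you share with the paper rather than introduce: the relation $Y_L^HC=M_LY_L^H$ with $\mathrm{spec}(M_L)=\Lambda$ presumes the convention that the ``spectrum'' of the left eigenspace refers to eigenvalues of $C$ (i.e., $C^HY_L=Y_LM$ with $\mathrm{spec}(M)=\overline{\Lambda}$); under the literal reading of Definition~\ref{def:eigenspace} applied to $C^H$ one would get $\overline{\Lambda}$ in place of $\Lambda$, and then disjointness from $\Lambda^c$ could fail (e.g.\ spectra $\{i\}$ and $\{-i\}$) --- so it is worth stating this convention explicitly, as you in effect do, since it is also the reading needed to make the paper's own proof correct.
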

\begin{proof}
Let $(\calY_L,\calY_R)$ be a pair of spectral left-right eigenspaces of $C$.
Then there exists $X$ unitary such that $X^HCX=\left[\begin{smallmatrix}
    C_{11} & C_{12} \\ 0 & C_{22}
\end{smallmatrix}\right]$, the first $p$ columns of $X$ span
$\calY_R$, and $C_{11}$ and $C_{22}$ have no eigenvalue in common.
Therefore, there exists a matrix $L$ such that
$C_{11}L-LC_{22}=-C_{12}$~\cite{Ste73}. Let
$S=X\left[\begin{smallmatrix} I & L \\ 0 &
I\end{smallmatrix}\right]$. Then the first $p$ columns of $S$ span
$\calY_R$. One easily checks that
$S^{-1}CS=\left[\begin{smallmatrix} C_{11} & 0 \\ 0 & C_{22}
\end{smallmatrix}\right]$. Moreover $C^HS^{-H} =
S^{-H}\left[\begin{smallmatrix} C_{11}^H & 0 \\ 0 & C_{22}^H
\end{smallmatrix}\right]$. Therefore the first $p$ columns of
$S^{-H}$ span an eigenspace of $C^H$ whose spectrum is the same as
the one of $\calY_R$. That is, the first $p$ columns of $S^{-H}$ span
$\calY_L$.
\end{proof}

The Rayleigh quotient iteration (RQI) is a classical method for
computing a single eigenvector of a Hermitian matrix $A$. It induces
an iteration on the projective space $\bbP^ {n-1}$ that can be written
as follows.
\begin{lgrthm}[RQI on projective space]
\label{al:RQI} \textcolor{black}{Let $A=A^H$ be an $n\times n$
matrix. Given $\calS_0$ in the projective space $\bbP^{n-1}$, the RQI
algorithm produces a sequence of elements of $\bbP^{n-1}$ as follows. For
$k=0,1,2,\ldots$,}
\begin{enumerate*}
\item Pick $y$ in $\cc^n\setminus\{0\}$ such that
$\eqcllo{y}=\calS_k$.
\item Compute the Rayleigh quotient $\rho_k = (y^HAy)/(y^Hy)$.
\item If $A-\rho_k I$ is singular, then solve for its kernel and
stop. Otherwise, solve the system
\begin{equation}
(A-\rho_k I)z = y \label{eq:RQI}
\end{equation}
for $z$.
\item $\calS_{k+1}:= \eqcllo{z}$.
\end{enumerate*}
\end{lgrthm}
It is shown in~\cite{BS89} that around each (isolated) eigenvector
of $A$, there is a ball in which cubic convergence to the
eigenvector is uniform. The size of the ball depends on the
spacing between the eigenvalues. Globally, the RQI converges to an
eigenvector for any initial point outside a certain set of measure zero
described in~\cite{BS89}.


The \emph{Grassmann-Rayleigh Quotient Iteration}
(GRQI) is a generalization of the RQI that operates on $\Grass{p}{n}$,
the set of all $p$-dimensional subspaces of $\cc^n$~\cite{AMSV2002-01}.
\begin{lgrthm}[GRQI]
\label{al:GRQI}
Let $A=A^H$ be an $n\times n$
matrix. Given $\calY_0\in\Grasspn$, the GRQI algorithm
produces a sequence of $p$-dimensional subspaces of $\cc^n$ by iterating from
$\calY_0$ the mapping $\Grass{p}{n}\rightarrow\Grasspn: \calY \mapsto \calY_+$
defined as follows.
\begin{enumerate*}
\item Pick $Y \in \ST{p}{n}$ such that $\eqcllo{Y} = \calY$.
\item Solve the Sylvester equation
\begin{equation}
AZ - Z (Y^HY)^{-1} Y^HAY = Y
\label{eq:GRQI}
\end{equation}
for $Z \in \cc^{n\times p}$.
\item Define $\calY_+:= \eqcllo{Z}$.
\end{enumerate*}
\end{lgrthm}
It is shown in~\cite{AMSV2002-01} that the subspace $\calY_+$ does not depend
on the choice of basis $Y$ for $\calY$ in the first step. This iteration
converges cubically to the $p$-dimensional eigenspaces of $A$, which are the
only fixed points.

When the matrix $A$ is not normal, 
the stationary property of the Rayleigh quotient
fails. Consequently, the convergence rate of the RQI can be at best quadratic.
In order to recover cubic convergence, Ostrowski~\cite{Ost58-3} proposed a 
two-sided version of the RQI, \textcolor{black}{formulated as follows
in~\cite{Par74}.} 
\begin{lgrthm}[Two-Sided RQI]
\label{al:2sRQI} Let $C$ be an $n$-by-$n$ matrix. Pick initial
vectors $v_0$ and $u_0$ satisfying $v_0^Hu_0 \neq 0$,
$\|v_0\|=\|u_0\|=1$. For $k=0,1,2,\ldots$,
\\ 1.~~Compute $\rho_k = v_k^HCu_k/v_k^Hu_k$. \\ 2.~~If $C-\rho_kI$ is
singular solve $y^H(C-\rho_kI)=0$ and $(C-\rho_kI)x=0$ for $y,x\neq 0$
and stop, otherwise \\ 3.~~Solve both $v_{k+1}^H(C-\rho_kI)=v_k^H\nu_k$,
$(C-\rho_kI)u_{k+1}=u_k\tau_k$, where $\nu_k$ and $\tau_k$ are
normalizing factors. \\ 4.~~If $v_{k+1}^Hu_{k+1}=0$, then stop and admit
failure.
\end{lgrthm}
The Two-Sided RQI converges with cubic rate to the pairs of left-right
eigenvectors of $C$ with linear elementary divisor~\cite{Par74}.

\section{Two-Sided GRQI}
We propose the following generalization of
the Two-Sided RQI, which we call the \emph{Two-Sided Grassmann-Rayleigh
Quotient Iteration (2sGRQI)}.
\begin{lgrthm}[2sGRQI]
\label{al:2sGRQI}
\textcolor{black}{Let $C$ be an $n$-by-$n$ matrix. Given
$(\calY_{L_0},\calY_{R_0})\in\Grasspn\times\Grasspn$, the 2sGRQI algorithm
produces a sequence of pairs of $p$-dimensional subspaces of $\cc^n$ by
iterating from $(\calY_{L_0},\calY_{R_0})$ the mapping
$(\calY_L,\calY_R)\mapsto (\calY_{L+},\calY_{R+})$ defined as follows.} \\
1.~~Pick $Y_L$ and $Y_R$ in $\ST{p}{n}$ such that $\eqcllo{Y_L} =
\calY_L$ and $\eqcllo{Y_R} = \calY_R$. \\ 2.~~Solve the Sylvester equations
\begin{subequations} \label{eq:GRQI-LR}
\begin{gather}
CZ_R - Z_R \underbrace{(Y_L^HY_R)^{-1} Y_L^HCY_R}_{R_R} = Y_R
\label{eq:GRQI-R} \\
Z_L^H C - \underbrace{ Y_L^HCY_R (Y_L^HY_R)^{-1} }_{R_L} Z_L^H = Y_L^H
\label{eq:GRQI-L}
\end{gather}
\end{subequations}
for $Z_L$ and $Z_R$ in $\cc^{n\times p}$. \\ 3.~~Define
$\calY_{L+}:= \eqcllo{Z_L}$ and $\calY_{R+}:= \eqcllo{Z_R}$. \\
\end{lgrthm}


In point 1, one has to choose bases for $\calY_L$ and $\calY_R$. There
are infinitely many possibilities. Indeed, if $Y$ is a basis of
$\calY$, then $\{YM:M\in\ST{p}{p}\}$ is the (infinite) set of all
bases of $\calY$. Therefore, one has to make sure that $\calY_{L+}$
and $\calY_{R+}$ do not depend on the choice of basis.
\textcolor{black}{By a straightforward adaptation of the development
  carried out in~\cite{AMSV2002-01} for the GRQI algorithm}, if
$(Y_L,Y_R,Z_L,Z_R)$ solve~\eqref{eq:GRQI-LR} then
$(Y_LM,Y_RN,Z_LM,Z_RN)$ also solve~\eqref{eq:GRQI-LR} for all $M$, $N$
in $\ST{p}{p}$. Hence, the spans of $Z_L$ and $Z_R$ only depend on
$\calY_L$ and $\calY_R$, and not on the choice of the bases $Y_L$ and
$Y_R$.

In point 2, the matrix $Y_L^HY_R$ may not be invertible. This
corresponds to point 4 in the Two-Sided RQI
(Algorithm~\ref{al:2sRQI}). However, if $(\calY_L,\calY_R)$ is a pair of
spectral left-right eigenspaces of $C$, then $Y_L^HY_R$ is invertible
\textcolor{black}{(as a consequence of Proposition~\ref{thm:S})}, and by
continuity invertibility holds on a neighborhood of the pair of
eigenspaces. 

In point 2, the (uncoupled) Sylvester equations~\eqref{eq:GRQI-LR} may
fail to admit one and only one solution. This situation happens if and
only if $(Y_R,Y_L)$ belongs to the set
\begin{eqnarray*}
\calS &:=& \{(Y_L,Y_R) \in \STpn\times\STpn: R_R \text{ exists and
}
\sigma(C)\cap\sigma(R_R)\neq\emptyset \} \\
&=& \bigcup_{\lambda\in\sigma(C)} \{(Y_L,Y_R) \in
\STpn\times\STpn: R_R \text{ exists and } \det(R_R-\lambda I)=0\};
\end{eqnarray*}
this follows directly from the characterization of the eigenvalues of
Sylvester operators~\cite[Th.~4.4]{Ste73}. Since $\calS$ is the finite
union of algebraic sets, it has measure zero and the interior of its
closure is empty. This means that if $(\hat Y_L,\hat Y_R)$ does not
yield a unique solution, then there exists, arbitrarily close to
$(\hat Y_L,\hat Y_R)$, a pair $(Y_L,Y_R)$ and a neighborhood of this
pair on which the solution $(Z_L,Z_R)$ of~\eqref{eq:GRQI-LR} exists
and is unique. In our numerical experiments, when such a singularity
occurs (i.e., when the solution of the Sylvester equations returned by
Matlab contains {\tt Inf}'s or {\tt NaN}'s), we slightly perturb the
system. A justification for this technique is given
in~\cite{AMSV2002-01} and the numerical tests performed in
Section~\ref{sec:numerical-experiments} illustrate that the technique
works well in practice. 
\pacomm{(This discussion makes the
  paper vulnerable, I think, but we have to talk about it.)}

In point 3, if $Z_L$ or $Z_R$ is not full rank, then
$(\calY_{L+},\calY_{R+})$ does not belong to
$\Grasspn\times\Grasspn$. A tall $n\times p$ matrix $Z$ is rank
deficient if and only if all its $p\times p$ minors are zero.
Therefore, the set
\[
\calD:= \{(Y_L,Y_R): \rank(Z_L) < p \text{ or } \rank(Z_R) < p\}
\]
is a subset of a finite union of algebraic sets. So here again, $Z_L$ and $Z_R$
are full rank for a generic choice of $Y_L$, $Y_R$.


In practice, only a few iterates will be computed. In finite precision
arithmetic, the iterates no longer improve after a few (typically
two or three) iterations because of numerical errors (see numerical
experiments in Section~\ref{sec:numerical-experiments}). Stopping
criteria can rely on the principal angles between two successive
iterates and on the principal angles between $\calY_R$ and $A\calY_R$ or
$\calY_L$ and $A^H\calY_L$.

\section{Practical implementation}
\label{sec:practical-implementation}

The major computational task in both GRQI (Algorithm~\ref{al:GRQI})
and 2sGRQI (Algorithm~\ref{al:2sGRQI}) is to solve the Sylvester
equations. For GRQI, it is recommended to choose an orthonormal basis
$Y$ (i.e., $Y^HY=I_p$) that makes $Y^HAY$ diagonal. This requires
solving a $p$-dimensional eigenproblem, which is cheap when $p$ is
small. With $Y^HY=I$ and $Y^HAY$ diagonal, the GRQI
equation~\eqref{eq:GRQI} decouples into $p$ linear systems for the $p$
columns of $Z$. We refer to~\cite{AMSV2002-01} for details.

The case of 2sGRQI (Algorithm~\ref{al:2sGRQI}) is quite different. The
matrices $R_R$ and $R_L$ in the 2sGRQI equations~\eqref{eq:GRQI-LR}
are not Hermitian and they may not be diagonalizable. A possible
approach to solving an equation such as~\eqref{eq:GRQI-R} is to reduce
it to a certain triangular structure by means of unitary
transformations and solve the new system of equations using back
substitution, as described in~\cite{GLAM92}. However, we observed in
numerical experiments that this technique tends to yield rather
inaccurate results when the iterates get close to a solution (the
final error was sometimes around $10^{-11}$ whereas the machine
epsilon was approximately $2.2\cdot 10^{-16}$, to be compared with the
results in Table~\ref{tab:conv}). The reason seems to lie in the fact
that the norm of the solution $z$ to the equation $(C-\rho I)z = x$
becomes very sentitive to $\rho$ when $\rho$ gets close to an
eigenvalue of $C$. The magic of the classical RQI is that the
direction of $z$ is well-conditioned, as pointed out by Peters and
Wilkinson~\cite{PW79}. However, in the block case, the magic weakens
because, in view of the workings of back substitution, the large
numerical error in the norm of any one column of $Z$ jeopardizes the
accuracy of the other columns whose computation depends on that
column.

Consequently, we recommend reducing the small block shifts $R_R$ and
$R_L$ in the 2sGRQI equations~\eqref{eq:GRQI-LR} to quasi-diagonal
form, or to (complex) diagonal form if complex arithmetic is
available. To fix ideas, assume complex arithmetic and consider the
first equation,~\eqref{eq:GRQI-R}, namely
\[
C Z_R - Z_R R_R = Y_R.
\]
Assuming that $R_R$ is nondefective, let $R_R = W_R \,
\diag(\rho_1,\ldots,\rho_p)\, W_R^{-1}$ be an eigenvalue decomposition
of $R_R$. Multiplying~\eqref{eq:GRQI-R} on the right by $W_R$ yields
\[
C \tilde Z_R - \tilde Z_R \diag(\rho_1,\ldots,rho_p) = \tilde Y_R
\]
where $\tilde Y_R = Y_R W_R$ and $\tilde Z_R = Z_R W_R$. The advantage
of this reformulation of~\eqref{eq:GRQI-R} is that it yields $p$
decoupled equations
\[
(C-\rho_i I) \tilde Z_R e_i = \tilde Y_r e_i
\]
for each column $\tilde Z_R e_i$ of $\tilde Z_R$. Back propagation is
thus no longer needed. A drawback is that this technique does not work
when $R_R$ is defective, and numerical errors on $W_R$ may become
large when $R_R$ is close to being defective. Nevertheless, in our
extensive numerical experiments on randomly chosen matrices, these
difficulties were not noticed (see
Section~\ref{sec:numerical-experiments}). 

The same kind of discussion applies to the left
equation~\eqref{eq:GRQI-L}. Note that since $R_L = (Y_L^HY_R) R_R
(Y_L^HY_R)^{-1}$ is a similarity transformation of $R_R$, we have that
$W_L = (Y_L^HY_R)W_R$ is a matrix of eigenvector of $R_L$. Hence, the
eigendecomposition of $R_L$ is readily obtained from that of $R_R$.

\pacomm{(This is old text that advocates reducing the small block
  shift to triangular structure. As we now know, this is a bad idea
  since the large errors on the magnitude of the columns backpropagate
  and yield quite inaccurate subspaces.) 
Firstly,~\eqref{eq:GRQI-R} can be rewritten as
\begin{equation}  \label{eq:GRQI-R-mod}
C\tilde Z_R (Y_L^HY_R) - \tilde Z_R Y_L^HCY_R = Y_R
\end{equation}
where $\tilde Z_R:=Z_R(Y_L^HY_R)^{-1}$. Since $\tilde Z_R$ and $Z_R$ have the
same span, equation~\eqref{eq:GRQI-R-mod} can be solved instead
of~\eqref{eq:GRQI-R}. The same reasoning can be applied to the left
equation~\eqref{eq:GRQI-L}. Therefore Algorithm~\ref{al:2sGRQI} is left
unmodified if~\eqref{eq:GRQI-LR} is replaced by
\begin{subequations} \label{eq:2sGRQI}
\begin{gather}
C Z_R\,(Y_L^HY_R) - Z_R\,(Y_L^HCY_R) = Y_R
\label{eq:2sGRQI-R} \\
C^HZ_L\,(Y_R^HY_L) -  Z_L\,(Y_R^HC^HY_L) = Y_L.
\label{eq:2sGRQI-L}
\end{gather}
\end{subequations}

Let us focus for the moment on the first
equation,~\eqref{eq:2sGRQI-R}, rewritten as
\begin{equation}  \label{eq:2sGRQI-R-bis}
C Z B^H - Z D^H = E
\end{equation}
for simplicity. Rewrite~\eqref{eq:2sGRQI-R-bis} as
\[
\underbrace{(Q_1CU_1)}_P \underbrace{(U_1^HZU_2)}_Y
\underbrace{(U_2^HB^HQ_2^H)}_{R^H} - \underbrace{(Q_1U_1)}_S \underbrace
{(U_1^HZU_2)}_Y \underbrace{(U_2^HD^HQ_2^H)}_{T^H} = \underbrace{Q_1EQ_2^H}_F
\]
in which $Q_1$, $U_1$, $Q_2$, $U_2$ are unitary matrices. The QZ algorithm
of Moler and Stewart~\cite{MS73} can be applied on the pair $(D,B)$ so that $T$
is quasi-upper-triangular and $R$ is upper triangular. This is a cheap problem
here because both $B$ and $D$ are small $p$-by-$p$ matrices. Due to its
particular triangular structure, the system
\[
PYR^H-SYT^H=F
\]
can be solved by back substitution techniques. Denoting by $a_k$ the $k$th
column of $A$, one has to solve $2n\times 2n$ block structured systems like
\[
\begin{pmatrix} r_{k-1,k-1}P-t_{k-1,k-1}S & r_{k-1,k}P-t_{k-1,k}S \\
-t_{k,k-1}S & r_{kk}P-t_{kk}S \end{pmatrix}
\begin{pmatrix} y_{k-1} \\ y_k \end{pmatrix}
= \begin{pmatrix} f_{k-1}^{(k)} \\ f_k^{(k)} \end{pmatrix}
\]
where $f_k^{(j)}$ is defined by the recursion $f_k^{(p)} = f_k$,
$f_k^{(j-1)} = f_k^{(j)} - r_{kj}(Py_j) + t_{kj}(Sy_j)$. Moreover, one
can take advantage of the degrees of freedom in $Q_1$ and $U_1$ to make
these systems easier to solve. For more details,
see~\cite{GLAM92}. 

The same kind of argument applies to the left
equation~\eqref{eq:2sGRQI-L}. Note that the QZ transformations need not be
computed a second time.
}

\pacomm{(Another approach is to diagonalize the small (non-Hermitian)
block Rayleigh quotients $R_R$ and $R_L$, and solve the resulting
decompled RQI equations~\eqref{eq:GRQI-LR}. At first sight, this looks
like a bad idea, since the diagonalizing tranformation may be
ill-conditioned, while the Schur form proposed above does not suffer
from this problem as it involves a unitary transformtion. However,
the diagonalization has the major advantage that it completely
decouples the equations; errors in solving one of the equations do not
affect the others. This is not the case in the Schur form, where
errors are propagated via the back substitution. This propagation of
errors is problematic since, whereas it is known that the direction of
the $z$ vectors obtained by the classical RQI is
well-conditioned---that's why RQI works in practice---, its norm is
not. Errors in the norm are back-propagated and affect even the
direction of the other columns of $Z$, which in turn affects the span
of $Z$. In numerical experiments, I observed that the Schur approach
does not work well, whereas the diagonalization approach seems to work
satisfactorily.)  }

\pacomm{Yet another way to compute the column space of the solution
$Z$ of an equation
\[
CZ - ZJ = Y,
\]
where $J$ is upper triangular, is to use the Gram-Schmidt
backsubstitution process proposed by Gallivan, Vandendorpe and Van
Dooren in ``Model Reduction and the Solution of Sylvester Equations''
(draft). The idea is to consider matrices $Z^{(i)}$, $J^{(i)}$,
$Y^{(i)}$, and $R^{(i)}$ such that the first $i$ columns of $Z^{(i)}=Z
\tilde R^{(i)^{-1}}$ 
are the Gram-Schmidt of the first $i$ columns of $Z$, the other
columns (not computed) of $Z^{(i)}$ are the corresponding columns of
$Z$, $J^{(i)}=\tilde R^{(i)}J\tilde R^{{(i)}^{-1}}$, $Y^{(i)}=Y\tilde
R^{{(i)}^{-1}}$, $\tilde R^{(i)}$ (not computed) upper triangular, and
consequently
\[
CZ^{(i)} - Z^{(i)} J^{(i)} = Y^{(i)}.
\]
To go from step $i$ to step $i+1$, compute column $i+1$ of $Z^{(i)}$
(thus of $Z$) using backsubstitution. Perform Gram-Schmidt on column
$i+1$ of $Z^{(i)}$ to get $Z^{(i+1)} := Z^{(i)} {R^{(i+1)}}^{-1}$ with
$R^{(i+1)}$ as described in Antoine's draft. Compute $J^{(i+1)}:=
R^{(i+1)} J^{(i)} {R^{(i+1)}}^{-1}$ and $Y^{(i+1)}:= Y^{(i)}
{R^{(i+1)}}^{-1}$. 
The snag is that the $(i,i)$ element of $R^{(i)}$ is going to be
very large and very inaccurate. As a consequence, the column $i+1$ of
$J^{(i)}$ is going to have very large and inaccurate elements, which
will introduce large errors in the backsubstitution process. So I
doubt this will work. But it is worth trying. (Note that in the model
reduction problem, one rather expect to have very small $(i,i)$
elements in $R^{(i)}$ as the iteration progresses, and this is fine.)    
}

\section{Local convergence}
\label{sec:cubic}

The following local convergence analysis can be thought of as a
two-sided generalization of the proof of cubic convergence of the
block-RQI (equivalent to the Grassmann-RQI of~\cite{AMSV2002-01})
given in~\cite{Smi97}. 

Let $(\calV_L,\calV_R)$ be a pair of spectral left-right eigenspaces
of $C$, and let $V_L$ and $V_R$ be corresponding eigenbases. We assume
that the eigenspaces are nondefective, that is, the matrix
$(V_L^HV_R)^{-1}V_L^HCV_R$ is diagonalizable by a similarity
transformation. Since $(\calV_l,\calV_R)$ is nondefective, it follows
that for all $\calY_L$ and $\calY_R$ sufficiently close to $\calV_L$
and $\calV_R$, the block Rayleigh quotients $R_R$ and $R_L$ are
diagonalizable by similarity transformations $W_R$ and $W_L$.
Equations~\eqref{eq:GRQI-LR} thus can be solved in two steps: (i)
diagonalize the small block Rayleigh quotients, hence decoupling the
equations and reducing them to classical two-sided RQI equations; (ii)
solve the decoupled two-sided RQI equations, yielding matrices
$Z_LW_L$ and $Z_RW_R$ that span $\calY_{L+}$ and $\calY_{R+}$. The key
of the convergence analysis is an ``oblique'' generalization
of~\cite[Th.~2]{Ste2001-LAA}, showing that the angles between the
right Ritz vectors (the columns of $Y_RW_R$) and the ``corresponding''
right eigenvectors of $A$ are of the order of the largest principal
angle between $\calY_R$ and $\calV_R$, and likewise for the left Ritz
vectors and eigenvectors; see Lemma~\ref{lmm:Ste01-obl}. Then the
result follows quite directly from the cubic convergence of the
non-block two-sided RQI.

\begin{lmm} \label{lmm:Ste01-obl} 
Let $(\lambda,x)$ be an eigenpair of an $n\times n$ matrix $C$. Let
$Y_R$ and $Y_L$ be orthonormal $n\times p$ matrices, $p<n$, such that
$Y_L^HY_R$ is invertible. Let $w_R$ be an eigenvector of 
\[
B:= (Y_L^HY_R)^{-1} Y_L^HCY_R
\]
associated with the eigenvalue of $B$ that is closest to
$\lambda$. Then
\[
\boxed{
\sin\angle(Y_Rw_R,x) \leq \left[
  1+\frac{2(\cos\delta)^{-1}r_L\alpha_\delta(\epsilon)}{\sep(w_R^HBw_R,w_{R\perp}^HBw_{R_\perp})-r_L\gamma_\delta(\epsilon)}\right] (1+\tan\delta) \epsilon
}
\]
where $\epsilon := \sin\angle(Y_R,x)$ is the angle between the
direction of $x$ and the span of $Y_R$, $\delta := \angle(Y_R,Y_L)$ is
the largest principal angle between the spans of $Y_R$ and $Y_L$,
$\alpha_\delta(\epsilon) :=
\tfrac{1}{\sqrt{1-\epsilon^2}-\epsilon\tan\delta}$ satisfies
$\lim_{\epsilon\to0}\alpha_\delta(\epsilon) = 1$,
$\gamma_\delta(\epsilon):= (\cos\delta
(\sqrt{1-\epsilon^2}-\epsilon\tan\delta))^{-1} (1+\tan\delta)\epsilon$
satisfies $\lim_{\epsilon\to0}\gamma_\delta(\epsilon) = 0$, and $r_L
:= \|Y_{L\perp}^HA^HY_L\|$ where $Y_{L\perp}\in\cc^{n\times(n-p)}$ is
an orthonormal basis of the orthogonal complement of the span of
$Y_L$. 
\end{lmm}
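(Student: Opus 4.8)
The plan is to push everything into the coefficient space $\cc^p$ via the orthonormal basis $Y_R$ and reduce the whole lemma to a single scalar quantity. First I normalize $\|x\|=1$ and decompose $x = Y_R c + Y_{R\perp} s$, where $Y_{R\perp}$ is an orthonormal basis of the orthogonal complement of $\ecl{Y_R}$; then $\epsilon=\|s\|$ and $\|c\|=\sqrt{1-\epsilon^2}$ by Proposition~\ref{prp:tan-delta}-type trigonometry. Taking $\|w_R\|=1$ and letting $w_{R\perp}$ be an orthonormal basis of its orthogonal complement in $\cc^p$, I set $\gamma:=w_R^Hc$ and $f:=w_{R\perp}^Hc$. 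A direct computation gives the exact identity $\sin^2\angle(Y_Rw_R,x)=\epsilon^2+\|f\|^2$, so the entire lemma reduces to bounding $\|f\|$, the part of $c$ not aligned with the Ritz direction $w_R$.

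Next I produce the governing residual equation. Multiplying $Cx=\lambda x$ on the left by $(Y_L^HY_R)^{-1}Y_L^H$ and substituting $x=Y_Rc+Y_{R\perp}s$ yields $(B-\lambda I)c=r$ with $r:=(Y_L^HY_R)^{-1}Y_L^H(\lambda I-C)Y_{R\perp}s$. Because $w_R$ is a right eigenvector of $B$ one has $w_{R\perp}^HB=B_{22}w_{R\perp}^H$ with $B_{22}:=w_{R\perp}^HBw_{R\perp}$, so projecting the residual equation onto $w_{R\perp}$ gives $(B_{22}-\lambda I)f=w_{R\perp}^Hr$, i.e. $f=(B_{22}-\lambda I)^{-1}w_{R\perp}^Hr$. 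The naive bound $\|f\|\le\|r\|/\sep$ is useless here: splitting $Y_L^HC=(Y_L^HCY_L)Y_L^H+(Y_L^HCY_{L\perp})Y_{L\perp}^H$ shows $r$ contains a term proportional to $\sin\delta$ but carrying the large factor $\|\lambda I-Y_L^HCY_L\|$, which must not appear in the final estimate.

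The resolution of this — the step I expect to be the main obstacle — is a cancellation that removes the large factor entirely. Using the similarity $B=(Y_L^HY_R)^{-1}(Y_L^HCY_L)(Y_L^HY_R)+E_B$ with $\|E_B\|=O(r_L\tan\delta)$, I rewrite the dangerous part of $r$ as $(\lambda I-B+E_B)(Y_L^HY_R)^{-1}Y_L^HY_{R\perp}s$; then the commutation identity $w_{R\perp}^H(\lambda I-B)=(\lambda I-B_{22})w_{R\perp}^H$ lets $(B_{22}-\lambda I)^{-1}$ act on $(\lambda I-B_{22})$ and collapse to $-I$. The net effect is $f=-g+(B_{22}-\lambda I)^{-1}h$, where $g:=w_{R\perp}^H(Y_L^HY_R)^{-1}Y_L^HY_{R\perp}s$ obeys $\|g\|\le\tan\delta\,\epsilon$ (from $\|(Y_L^HY_R)^{-1}\|=(\cos\delta)^{-1}$ and $\|Y_L^HY_{R\perp}\|=\sin\delta$), and $h$ collects only the $r_L$-proportional terms, so $\|h\|$ is $r_L\epsilon$ times a factor depending only on $\delta$. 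This is exactly why the large eigenvalue-independent factor drops out and only $r_L$ survives in the numerator, giving the $(1+\tan\delta)\epsilon$ leading term and the $2(\cos\delta)^{-1}r_L\alpha_\delta(\epsilon)$ numerator (the factor $\alpha_\delta$ coming from the elementary lower bounds used when measuring the $r_L$-terms in units of $\epsilon$).

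Finally I pass from $\sep(\lambda,B_{22})$ to $\sep(\mu,B_{22})=\sep(w_R^HBw_R,w_{R\perp}^HBw_{R\perp})$ via $\sep(\lambda,B_{22})\ge\sep(\mu,B_{22})-|\lambda-\mu|$, and I must control the gap $|\lambda-\mu|$ of the closest Ritz value $\mu$. Testing the residual against the left eigenvector $v_R$ of $B$ at $\mu$ triggers the same cancellation, $v_R^H(\lambda I-B)=(\lambda-\mu)v_R^H$, which isolates $|\lambda-\mu|$ as a ratio whose numerator is again $O(r_L\epsilon)$; bounding the denominator below by $\cos\delta(\sqrt{1-\epsilon^2}-\epsilon\tan\delta)$ yields the correction $-r_L\gamma_\delta(\epsilon)$. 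Assembling $\sin\angle(Y_Rw_R,x)=\sqrt{\epsilon^2+\|f\|^2}\le(1+\tan\delta)\epsilon+\|h\|/(\sep(\mu,B_{22})-r_L\gamma_\delta(\epsilon))$ and factoring out $(1+\tan\delta)\epsilon$ gives the displayed bound. The two delicate points are the pair of cancellations, which are what keep the large, eigenvalue-independent factors out of the estimate, and the clean control of $\mu$ under the nondefectiveness hypothesis, where the non-normality of $B$ must be handled with care.
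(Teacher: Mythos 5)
Your architecture is genuinely different from the paper's, and most of it is sound. Where you decompose $x$ orthogonally along $Y_{R\perp}$ and then must cancel the dangerous term carrying $\|\lambda I-Y_L^HCY_L\|\sin\delta$ by hand, the paper decomposes $x$ \emph{obliquely}, $x=Y_Rx_R+Y_{L\perp}x_{L\perp}$ with $x_R=(Y_L^HY_R)^{-1}Y_L^Hx$, so that premultiplying $Cx=\lambda x$ by $(Y_L^HY_R)^{-1}Y_L^H$ directly yields $(B+E)\hat x_R=\lambda\hat x_R$ with a rank-one $E$ of norm at most $r_L\gamma_\delta(\epsilon)$: the large eigenvalue-scale factor never appears, so there is nothing to cancel. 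In fact your cancellation reconstructs exactly this object, since $c+g' = Y_R^Hx+(Y_L^HY_R)^{-1}Y_L^HY_{R\perp}Y_{R\perp}^Hx = (Y_L^HY_R)^{-1}Y_L^Hx = x_R$. Your identity $\sin^2\angle(Y_Rw_R,x)=\epsilon^2+\|f\|^2$ is correct and is even a sharpening of the paper's triangle inequality $\angle(Y_Rw_R,x)\le\angle(Y_Rw_R,Y_Rx_R)+\angle(Y_Rx_R,x)$, and your intertwining relations $w_{R\perp}^HB=B_{22}w_{R\perp}^H$ and the resulting $f=-g+(B_{22}-\lambda I)^{-1}h$ with $\|g\|\le\tan\delta\,\epsilon$, $\|h\|=O(r_L\epsilon)$ all check out.

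The genuine gap is your control of $|\lambda-\mu|$ in the last step. Testing against the left eigenvector $v_R$ of $B$ gives $(\mu-\lambda)\,v_R^H(c+g')=v_R^H(\text{$r_L$-small terms})$, hence $|\lambda-\mu|\le O(r_L\epsilon)/|v_R^Hx_R|$, and to obtain $|\lambda-\mu|\le r_L\gamma_\delta(\epsilon)$ you need $|v_R^Hx_R|$ bounded below by a function of $\epsilon$ and $\delta$ alone. No such bound exists: $B$ is non-normal, and for small $\epsilon$ the vector $x_R$ is essentially aligned with $w_R$, so $|v_R^Hx_R|\approx|v_R^Hw_R|\,\|x_R\|$, where $|v_R^Hw_R|$ (with $\|v_R\|=\|w_R\|=1$) is the reciprocal condition number of the eigenvalue $\mu$ and can be arbitrarily small while all quantities in the lemma stay fixed; e.g.\ $B=\left(\begin{smallmatrix}0&M\\0&1\end{smallmatrix}\right)$ with $M$ large has $\mu=0$, $\sep(\mu,B_{22})=1$, but $|v_R^Hw_R|=(1+M^2)^{-1/2}$. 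Your route therefore proves the bound only with an extra factor $\kappa(\mu)=1/|v_R^Hw_R|$ inside $\gamma_\delta$, which the statement does not permit. This eigenvalue-shift control for non-normal $B$ is precisely what the paper outsources to \cite[Th.~5.1]{JiaSte2000}, whose conclusion $\tan\angle(w_R,\hat x_R)\le 2\|E\|/\bigl(\sep(w_R^HBw_R,w_{R\perp}^HBw_{R\perp})-2\|E\|\bigr)$ already bundles the passage from $\lambda$ to the Ritz quantities into a correction that is linear in $\|E\|$. To close your proof, invoke that theorem at the point where your computation establishes that $x_R$ is an exact eigenvector of $B$ plus a rank-one perturbation of norm $O(r_L\epsilon)$ --- which it already does --- rather than re-deriving the eigenvalue control via the left eigenvector, which fails.
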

\begin{proof}
  It is readily checked that the statement is not affected by a
  unitary change of coordinates in $\cc^n$. Therefore, without loss of
  generality, we
  work in a unitary coordinate system such that $Y_R = \begin{bmatrix} I_p \\
    0_{(n-p)\times p}
  \end{bmatrix}$. Let $Y_{L\perp}\in\cc^{n\times(n-p)}$ and
  $Y_{R\perp}\in\cc^{n\times(n-p)}$ be orthonormal bases of the
  orthogonal complements of the spans of $Y_L$ and $Y_R$,
  respectively. Assume without loss of generality that the eigenvector
  $x$ has unit norm.  Consider the block decompositions
  $x=\begin{bmatrix}x_a \\ x_b
  \end{bmatrix}$ and $Y_L = \begin{bmatrix} Y_{La} \\ Y_{Lb}
  \end{bmatrix}$. Consider also the decomposition $x=Y_R x_R +
  Y_{L\perp}x_{L\perp}$, which yields
\[
x_R:=(Y_L^HY_R)^{-1}Y_L^H x, \qquad
x_{L\perp}:=(Y_{R\perp}^HY_{L\perp})^{-1} Y_{R\perp}^H x.
\]
Since $\epsilon=\sin\angle(Y_R,x)$, we have 
$\|x_a\|^2 = 1-\epsilon^2$ and $\|x_b\| = \epsilon$. We also have
$(Y_L^HY_R)^{-1}Y_L^H = \begin{bmatrix}I & T \end{bmatrix}$ where $T=
(Y_{La})^{-1}Y_{Lb}$. It follows from Proposition~\ref{prp:tan-delta}
that $\|T\|=\tan\delta$. We also
obtain
\[
Y_Rx_R = \begin{bmatrix}I \\ 0 \end{bmatrix} \begin{bmatrix} I & T
\end{bmatrix} x = 
\begin{bmatrix} x_a + Tx_b \\ 0 \end{bmatrix}.
\]
Acceptable choices for $Y_{L\perp}$ and $Y_{R\perp}$ 
are $Y_{L\perp} = \begin{bmatrix} -T \\
  I_{n-p} \end{bmatrix} (I_{n-p} + T^H T)^{-1/2}$ and $Y_{R\perp} =
\begin{bmatrix} 0_{p\times(n-p)} \\ I_{n-p} \end{bmatrix}$. This yields
$x_{L\perp} =  (I_{n-p}+T^HT)^{1/2} x_b$ and thus $\|x_L\| \leq
\sqrt{1+\tan^2\delta}\,\epsilon$. 

Since $\sin\angle(u,v) \leq \sin\angle(u,w) + \sin\angle(w,v)$ for all
$u,v,w\in\cc^n_0$, we have
\begin{equation}  \label{eq:a+a}
\angle(Y_Rw_R,x) \leq \angle(Y_Rw_R, Y_Rx_R) + \angle(Y_R x_R,x).
\end{equation}

Let us first consider the second term in~\eqref{eq:a+a}. 
Since 
\[
\sin\angle(Y_Rx_R,x) \leq \|Y_Rx_R-x\| \leq \| Y_Rx_R- \begin{bmatrix} x_a \\
0 \end{bmatrix}\| + \| \begin{bmatrix} x_a \\
0 \end{bmatrix} - x\|,
\]
it follows that
\begin{equation}  \label{eq:a2}
\sin\angle(Y_R x_R,x) \leq \|Tx_b\| + \|x_b\| 
\leq \tan\delta \ \epsilon + \epsilon = (1+\tan\delta)\epsilon.
\end{equation}
Note also for later use that, for all small $\epsilon$ such that
$\sqrt{1-\epsilon^2}>\epsilon\tan\delta$, we also obtain that $\|x_R\| \geq
\left|\|x_a\| - \|Tx_b\|\right| \geq
\sqrt{1-\epsilon^2} - \epsilon \tan\delta$. 

We now tackle the first term in~\eqref{eq:a+a}. Since $Y_R$ is
orthonormal, it follows that $\angle(Y_Rw_R,Y_Rx_R) =
\angle(w_R,x_R)$. Pre-multiplying the equation $Cx=\lambda x$ by
$(Y_L^HY_R)^{-1}Y_L^H$ yields 
\[
(Y_L^HY_R)^{-1}Y_L^H C (Y_Rx_R + Y_{L\perp}x_{L\perp}) = x_R \lambda,
\]
which can be rewritten as
\[
(B+E)\hat x_R = \lambda \hat x_R,
\]
where $\hat x_R := x_R \|x_R\|^{-1}$ and
\[
E := (Y_L^HY_R)^{-1}Y_L^HAY_{L\perp}
x_{L\perp} \|x_R\|^{-1} \hat x_R^H.
\]
Then, by~\cite[Th.~5.1]{JiaSte2000},
\[
\sin \angle (w_R,\hat x_R) \leq \tan \angle (w_R,\hat x_R) \leq
\frac{2\|E\|}{\sep\left(w_R^HBw_R,(w_R)_\perp^HB(w_R)_\perp\right) -
2\|E\|} 
\]
if the bound is smaller than $1$. 
The expression of the bound can be simplified using
\begin{multline*}
\|E\| = \|(Y_L^HY_R)^{-1}Y_L^HCY_{L\perp}
x_{L\perp} \| \|x_R\|^{-1} 
\leq
\|(Y_L^HY_R)^{-1}\| \|Y_{L\perp}^HC^HY_L\| \|x_{L_\perp}\| \|x_R\|^{-1}
\\ \leq \frac{1}{\cos\delta} r_L (1+\tan\delta)\epsilon
\frac{1}{\sqrt{1-\epsilon^2}-\epsilon\tan\delta},
\end{multline*}
where we have used the bound $\sqrt{1+\tan^2\delta} \leq
(1+\tan\delta)$ that holds for all $\delta\in[0,\tfrac{\pi}{2})$. 
Replacing all these results in~\eqref{eq:a+a} yields the desired
bound.
\end{proof}

\begin{thrm}
  \label{thm:cubic-2sGRQI-S} Let $(\calV_L,\calV_R)$ be a pair of
  $p$-dimensional spectral nondefective left-right eigenspaces of an
  $n\times n$ matrix $C$ (Definition~\ref{def:eigenspace}). Then there
  is a neighborhood $\calN$ of $(\calV_L,\calV_R)$ in
  $\Grasspn\times\Grasspn$ and a $c>0$ such that, for all
  $(\calY_L,\calY_R)\in\calN$, the subspaces $\calY_{L+}$ and
  $\calY_{R_+}$ produced by the 2sGRQI mapping
  (Algorithm~\ref{al:2sGRQI}) satisfy
\[
\angle(\calY_{L_+},\calV_L) +
  \angle(\calY_{R_+},\calV_R) \leq c \left( \angle(\calY_{L},\calV_L) +
  \angle(\calY_{R},\calV_R) \right)^3.
\]
\end{thrm}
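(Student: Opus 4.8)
The plan is to exploit the fact, emphasized just before Lemma~\ref{lmm:Ste01-obl}, that the 2sGRQI equations \eqref{eq:GRQI-LR} decouple into $p$ independent classical two-sided RQI steps, and then to glue together the known cubic convergence of each of those scalar steps. Since $(\calV_L,\calV_R)$ is spectral and nondefective, Proposition~\ref{thm:S} supplies biorthogonal eigenbases $x_1,\dots,x_p$ of $\calV_R$ and $y_1,\dots,y_p$ of $\calV_L$ with $Cx_i=\lambda_i x_i$, $C^Hy_i=\bar\lambda_i y_i$ and $y_i^Hx_j=\delta_{ij}$. For $(\calY_L,\calY_R)$ in a small enough neighborhood $\calN$, the block Rayleigh quotient $R_R=(Y_L^HY_R)^{-1}Y_L^HCY_R$ is close to $\diag(\lambda_1,\dots,\lambda_p)$, hence diagonalizable as $R_R=W_R\,\diag(\rho_1,\dots,\rho_p)\,W_R^{-1}$ with $W_R$ uniformly well conditioned and $\rho_i\to\lambda_i$. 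Setting $W_L=(Y_L^HY_R)W_R$, I would first record that right-multiplying \eqref{eq:GRQI-R} by $W_R$, and left-multiplying \eqref{eq:GRQI-L} by $W_L^{-1}$ and conjugate-transposing, splits the two equations into the $p$ decoupled systems $(C-\rho_iI)\,z_{R,i}=u_i$ and $(C^H-\bar\rho_iI)\,z_{L,i}=v_i$, where $u_i:=Y_RW_Re_i$ and $v_i:=Y_LW_L^{-H}e_i$ are the right and left Ritz vectors. A direct computation gives $v_i^Hu_j=\delta_{ij}$ and $v_i^HCu_i=\rho_i$, so each pair $(v_i,u_i)$ with shift $\rho_i=v_i^HCu_i/v_i^Hu_i$ is exactly one step of the classical two-sided RQI (Algorithm~\ref{al:2sRQI}), and $\calY_{R+}=\ecl{Z_R}$, $\calY_{L+}=\ecl{Z_L}$ are spanned by $z_{R,1},\dots,z_{R,p}$ and $z_{L,1},\dots,z_{L,p}$ respectively.

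The second step bounds the quality of the Ritz vectors. Applying Lemma~\ref{lmm:Ste01-obl} with $(\lambda,x)=(\lambda_i,x_i)$ and $w_R=W_Re_i$ (the eigenvector of $B=R_R$ whose eigenvalue $\rho_i$ is closest to $\lambda_i$) gives $\sin\angle(u_i,x_i)\le C_1\sin\angle(Y_R,x_i)$; since $x_i\in\calV_R$ we have $\angle(Y_R,x_i)\le\angle(\calY_R,\calV_R)$, and the bracketed prefactor is uniformly bounded on $\calN$ because $\delta=\angle(\calY_R,\calY_L)$ stays away from $\tfrac{\pi}{2}$ (the spectral hypothesis keeps $V_L^HV_R$, hence $Y_L^HY_R$, invertible) while $\epsilon\to0$. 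Applying the same lemma to $C^H$ with the roles of $\calY_L,\calY_R$ exchanged yields $\sin\angle(v_i,y_i)\le C_1'\,\angle(\calY_L,\calV_L)$. Thus each Ritz pair approximates the corresponding eigenvector pair to first order in the subspace angles.

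Next I would invoke the cubic convergence of the classical two-sided RQI. Because $(\calV_L,\calV_R)$ is nondefective, every $\lambda_i$ has linear elementary divisors, so the one-step estimate of~\cite{Par74}—which, expanded in the biorthogonal eigenbasis, reads $|\rho_i-\lambda_i|=O(\angle(u_i,x_i)\,\angle(v_i,y_i))$ followed by $\angle(z_{R,i},x_i)=O(|\rho_i-\lambda_i|\,\angle(u_i,x_i))$ and symmetrically for $z_{L,i}$—gives
\[
\angle(z_{R,i},x_i)+\angle(z_{L,i},y_i)\le c_i\bigl(\angle(u_i,x_i)+\angle(v_i,y_i)\bigr)^3,
\]
with $c_i$ depending only on the separation of $\lambda_i$ from the rest of $\sigma(C)$ and on the fixed eigenvector conditioning, hence uniform on $\calN$. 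Combining this with the second step, each refined Ritz vector lies within an angle $O\bigl((\angle(\calY_L,\calV_L)+\angle(\calY_R,\calV_R))^3\bigr)$ of its eigenvector.

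Finally I would assemble the columnwise bounds into subspace bounds. Since $[x_1\,|\cdots|\,x_p]$ is a fixed, well-conditioned basis of $\calV_R$ and each $z_{R,i}$ is within a small angle of $x_i$, a standard subspace-perturbation estimate gives $\angle(\calY_{R+},\calV_R)\le K_R\max_i\angle(z_{R,i},x_i)$ with $K_R$ determined only by the conditioning of that basis, and likewise for the left subspace; adding the two and absorbing constants into a single $c$ yields the claim. The main obstacle is not any single inequality but the uniformity bookkeeping: one must check that $W_R,W_L$ remain bounded and boundedly invertible, that the lemma's prefactor and the constants $c_i$ are uniform over $\calN$, and—most delicately—that when the $\lambda_i$ are not distinct the Ritz vectors can still be matched to a fixed eigenbasis so that the final gluing applies. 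The distinct-eigenvalue case is clean, and the general nondefective case follows by choosing $\{x_i\}$ adapted to the convergent Ritz directions.
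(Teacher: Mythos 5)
Your proposal is correct and follows essentially the same route as the paper's proof: decouple the 2sGRQI equations by diagonalizing the block Rayleigh quotients so that each column performs one classical two-sided RQI step, control the Ritz vectors via Lemma~\ref{lmm:Ste01-obl} (applied to $C$ on the right and $C^H$ on the left), invoke the cubic one-step convergence of the non-block two-sided RQI columnwise, and reassemble the subspace bound through a constant depending on the conditioning of the fixed eigenbasis. Your additional bookkeeping---the verification that $v_i^Hu_j=\delta_{ij}$ and $v_i^HCu_i=\rho_i$, the uniformity of the prefactors on $\calN$, and the matching of Ritz vectors to eigenvectors when eigenvalues repeat---merely makes explicit steps the paper leaves implicit.
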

\begin{proof}
  Since the pair of eigenspaces is assumed to be spectral, it follows
  that $\angle(V_L,V_R) < \pi/2$. Therefore, taking the neighborhood
  $\calN$ sufficiently small, one has $\angle(Y_{R},Y_{L}) \leq
  \delta^\prime < \pi/2$.  Moreover, since the pair of eigenspaces is
  assumed to be nondefective, it follows that the eigenbases $V_R$ and
  $V_L$ have full rank. Note that for each column $x$ of $V_R$, we
  have $\angle(Y_R,x)\leq\angle(Y_R,V_R)$. Lemma~\ref{lmm:Ste01-obl}
  implies that for any $c_1>1+\tan\delta^\prime$, there exists an
  $\epsilon>0$ such that, for all $(\calY_L,\calY_R)$ with
  $\angle(\calY_{L},\calV_L) + \angle(\calY_{R},\calV_R)<\epsilon$,
  the angle $\angle(Y_Rw_R,x)$ between $x$ and the nearest Ritz vector
  $Y_Rw_R$ satisfies $\angle(Y_Rw_R,x)\leq c_1\angle(Y_R,x)\leq
  c_1\angle(Y_R,V_R)$. Next, represent the subspaces
  $\calY_L$ and $\calY_R$ by their Ritz vectors, which 
  decouples~\eqref{eq:GRQI-R} into $p$ two-sided RQI equations. By
  taking $\epsilon$ sufficiently small, it follows from the cubic
  convergence of the two-sided RQI that there exists $c_2>0$ such
  that, for each column $x$ of $V_R$, we have $\angle((z_R)_i,x) < c_2
  (c_1\angle(Y_R,V_R))^3$ for at least one column $(z_R)_i$ of $Z_R$. It
  follows that $\angle(Z_R,V_R) < c_3c_2(c_1\angle(Y_R,V_R))^3$
  where $c_3$ is a constant that depends on the conditioning of the
  basis $V_R$. A similar reasoning applies to the left subspace.
\end{proof}


\pacomm{(Below, for information, I paste the analysis
of~\cite{AD2002-01}, in small font.)}

\tech{
\subsection{Alternative local convergence analysis}
This local convergence analysis of the 2sGRQI method 
is based on the approach that was taken in~\cite{AMSV2002-01}. In
contrast with the analysis above, it does not rely on decoupling the
2sGRQI Sylvester equations and it does not invoque the cubic
convergence of the two-sided RQI.

\begin{thrm}[cubic convergence of 2sGRQI]
\label{thm:cubic-2sGRQI} Let $C$ be an $n$-by-$n$ matrix. Let
$(\calV_L,\calV_R)$ be a pair of $p$-dimensional spectral
nondefective left-right eigenspaces of $C$ (see
Definition~\ref{def:eigenspace}). Then the 2sGRQI mapping
(Algorithm~\ref{al:2sGRQI}) admits a continuous extension on a
neighborhood of $(\calV_L,\calV_R)$. The point
$(\calV_L,\calV_R)$ is an attractive fixed point of the extended
mapping, and the order of convergence is at least cubic.
\end{thrm}
\textcolor{black}{Cubic convergence of Algorithm~\ref{al:2sGRQI} means, 
among others, cubic convergence to
zero of the scalar $\dist(\calY_L,\calV_L) + \dist(\calY_R,\calV_R)$,
where ``$\dist$'' denotes the largest principal angle between its
arguments. In other words, given $C$ and $(\calV_L,\calV_R)$, there
exist $\epsilon>0$ and $c>0$ such that, for all $(\calY_L,\calY_R)$ with
$\dist(\calY_{L+},\calV_{L+}) +
\dist(\calY_{R+},\calV_{R+}) < \epsilon$, one has $\dist(\calY_{L+},\calV_{L+})
+ \dist(\calY_{R+},\calV_{R+}) < c\, (\dist(\calY_L,\calV_L) +
\dist(\calY_R,\calV_R))^3$. Other compatible definitions of $\dist$ can be
found in~\cite{EAS98}.}

In order to prove Theorem~\ref{thm:cubic-2sGRQI}, we will need the following
modification of a technical lemma in~\cite{AMSV2002-01}.
\begin{lmm}  \label{thm:AX-X(A+E)=I}
Let $A$ be a nondefective $p\times p$ matrix. Let $B_\epsilon=\{E\in
\cc^{p\times p}:
\|E\|\leq \epsilon\}$. Let
$\Gamma=\{E\in\cc^{p\times p}:\sigma(A)\cap\sigma(A+E)=\emptyset\}$. Then
\begin{enumerate*}
\item $\Gamma$ is an open dense subset of $\cc^{p\times p}$.
\item The equation
\begin{equation}  \label{eq:AX-X(A+E)=I}
AX-X(A+E)=I
\end{equation}
admits a unique solution if and only if $E\in\Gamma$.
\item There exists $\epsilon>0$ such that, for all $E\in\Gamma\cap B_\epsilon$,
the solution $X$ of~\eqref{eq:AX-X(A+E)=I} is invertible. The
function $\Gamma\cap B_\epsilon \ni E\mapsto X^{-1}$ is continuous
and admits a continuous extension on $B_\epsilon$. We will also
use the notation $X^{-1}$ to denote the continuous extension.
\item There exist $\epsilon>0$ and $c>0$ such that, for all $E\in
B_\epsilon$,
\begin{equation}  \label{eq:X-1<=E}
\|X^{-1}\|\leq c\|E\|.
\end{equation}
\item If $A$ is normal, then~\eqref{eq:X-1<=E} holds with $c=1+\epsilon$.
\end{enumerate*}
\end{lmm}

Here are a few comments before we proceed with the proof of
Lemma~\ref{thm:AX-X(A+E)=I}. 

In~\cite{AMSV2002-01}, the corresponding technical lemma only deals with $A$
Hermitian. The continuous extension of $X^{-1}$ is not mentioned in the
statement, although it is implied in the proof.

The condition $A$ nondefective cannot be removed. Moreover, there is no
universal value for the constant $c$ in~\eqref{eq:X-1<=E}. We show these two
claims on an example. Take
\[
A=\begin{bmatrix} 0 & 1 \\ 0 & \delta \end{bmatrix},\ E=\begin{bmatrix}
\eta & 0 \\ 0 & \frac{\eta}{2} \end{bmatrix}.
\]
Then~\eqref{eq:AX-X(A+E)=I} yields
\begin{equation}  \label{eq:bad-X}
X = \begin{bmatrix} -\frac{1}{\eta} & \frac{1}{\eta(\eta+\delta)}\\ 0 &
-\frac{2}{\eta} \end{bmatrix} \text{ and }
X^{-1} = \begin{bmatrix} -\eta & -\frac{\eta}{2(\delta+\eta)} \\
0 & -\frac{\eta}{2} \end{bmatrix}.
\end{equation}
Let $\delta=0$. Then $A$ is defective, and one observes that $X^{-1}$ is
bounded away from zero. For $\delta$ fixed and nonzero, one has
$\|X^{-1}\|\simeq \tfrac{1}{\delta} O(\eta)$, so $c$ becomes arbitrarily
large for $\delta$ small.

\begin{proof} {\it (Lemma~\ref{thm:AX-X(A+E)=I})}
Point 1. The complement of $\Gamma$ is
$\bigcup_{\lambda\in\sigma(A)} \{E:\det(A+E-\lambda I)=0\}$, i.e.,
a finite union of algebraic sets. Therefore $\Gamma$ is open and
dense. \\ Point 2. See~\cite[\S 4.3]{Ste73}. \\ Points 3 and 4. The
proof of Lemma~5.3 in~\cite{AMSV2002-01} implies that points 3 and
4 are valid for $A$ (complex) diagonal. Here $A$ is any
nondefective matrix, so there exists a complex invertible matrix
$S$ and a complex diagonal matrix $D$ such that $A=SDS^{-1}$.
Rewrite~\eqref{eq:AX-X(A+E)=I} as $D\tilde X - \tilde X (D+\tilde
E)=I$, where $\tilde E := S^{-1}ES$ and $\tilde X := S^{-1}XS$
whence $\tilde X^{-1} := S^{-1}X^{-1}S$. The relation between the
tilded and original quantities are linear invertible maps.
Therefore, the results 3 and 4 also hold for any $A$ nondefective.
In particular, one has $\|X^{-1}\|\leq c_1\|\tilde X^{-1}\|$,
$\|\tilde X^{-1}\| \leq c_2 \|\tilde E\|$ and $\|\tilde E\|\leq
c_3 \|E\|$, so $\|X^{-1}\| \leq c\|E\|$. \\ Point 5. If $A$ is
normal, then $S$ can be chosen unitary. Therefore the inequalities
above hold with $c_1=c_3=1$. Moreover, Lemma~5.3
in~\cite{AMSV2002-01} shows that $\|\tilde X^{-1}\| \leq
(1+\epsilon) \|\tilde E\|$ holds for all $\tilde E$ in
$B_\epsilon$, i.e., $E$ in $B_\epsilon$.
\end{proof}

\begin{proof} {\it (Theorem~\ref{thm:cubic-2sGRQI})}
The proof is inspired by the proof of cubic convergence of the Two-Sided
RQI in~\cite{Par74} and the proof of cubic convergence of the GRQI
in~\cite{AMSV2002-01}. By Proposition~\ref{thm:S}, since $(\calV_L,\calV_R)$ is
a pair of spectral nondefective left-right eigenspaces of $C$, there is an
invertible (real) matrix $S$ such that $\calV_R$ is spanned by the first $p$
columns of $S$, $\calV_L$ is spanned by the first $p$ columns of $S^{-H}$ and
$S^{-1}CS =: D =
\left[
\begin{smallmatrix} D_1 & 0 \\ 0 & D_2 \end{smallmatrix} \right]$ where $D_1$
is nondefective (so one can even choose it quasi-diagonal) and has
no common eigenvalue with $D_2$.

Let
\[
Y_R = S \begin{pmatrix} I_p \\ K_R \end{pmatrix},
Y_L = S^{-H} \begin{pmatrix} I_p \\ K_L \end{pmatrix}
\]
so that $\ecl{Y_R}$, $\ecl{Y_L}$ are arbitrary perturbations of
$\calV_R$, $\calV_L$. The 2sGRQI mapping sends
$(\ecl{Y_L},\ecl{Y_R})$ to $(\ecl{Z_L},\ecl{Z_R})$ where $Z_R$ and
$Z_L$ satisfy the Sylvester equations~\eqref{eq:GRQI-LR} repeated
here
\begin{subequations} \label{eq:GRQI-LR2}
\begin{gather}
CZ_R - Z_R (Y_L^HY_R)^{-1} Y_L^HCY_R = Y_R
\label{eq:GRQI-R2} \\
Z_L^H C -  Y_L^HCY_R (Y_L^HY_R)^{-1} Z_L^H = Y_L^H.
\label{eq:GRQI-L2}
\end{gather}
\end{subequations}
Put
\[
Z_R = S \begin{pmatrix} Z_{R1} \\ Z_{R2} \end{pmatrix},\
Z_L = S^{-H} \begin{pmatrix} Z_{L1} \\ Z_{L2} \end{pmatrix},\
K_{R+} = Z_{R2}Z_{R1}^{-1},\
K_{L+} = Z_{L2}Z_{L1}^{-1},
\]
so that
\[
\ecl{Z_R} = \eclbig{S \begin{pmatrix} I_p \\ K_{R+} \end{pmatrix}},
\ecl{Z_L} = \eclbig{S^{-H} \begin{pmatrix} I_p \\ K_{L+} \end{pmatrix}}.
\]
Premultiplying~\eqref{eq:GRQI-R2} by $S^{-1}$ yields the two uncoupled
equations
\begin{gather}
D_1 Z_{R1} - Z_{R1} (I+K_L^HK_R)^{-1} (D_1+K_L^HD_2K_R) = I_p
\label{eq:2sGRQI-R-spectral-1} \\
D_2 Z_{R2} - Z_{R2} (I+K_L^HK_R)^{-1} (D_1+K_L^HD_2K_R) = K_R. \label{eq:2sGRQI-R-spectral-2}
\end{gather}

From~\eqref{eq:2sGRQI-R-spectral-2}, one has $\|Z_{R2}\|\leq
\|\T^{-1}_{K_L,K_R}\|\,\|K_R\|$ where
\[
\T_{K_L,K_R}: Z \mapsto D_2 Z - Z (I+K_L^HK_R)^{-1} (D_1+K_L^HD_2K_R)
\]
is a linear operator depending continuously on $K_L$, $K_R$. $\T_{0,0}$ is
invertible because $D_1$ and $D_2$ have no eigenvalue in common. Therefore,
there exists $c_2>0$ such that $\|\T^{-1}_{K_L,K_R}\|\leq c_2$ for all
$K_L,K_R$ sufficiently small, whence
\begin{equation}  \label{eq:lin-bound}
\|Z_{R2}\| \leq c_2\, \|K_R\|
\end{equation}
for all $K_L,K_R$ sufficiently small. 

From~\eqref{eq:2sGRQI-R-spectral-1}, by
Lemma~\ref{thm:AX-X(A+E)=I} there exists $c_1>0$ such that
\begin{equation}  \label{eq:quad-bound}
\|Z_{R1}^{-1}\| \leq c_1\, \|(I+K_L^HK_R)^{-1} (D_1+K_L^HD_2K_R) - D_1\| \leq
c'_1 \|K_L\|\, \|K_R\|
\end{equation}
for all $K_L,K_R$ sufficiently small, where $Z_{R1}^{-1}$ is defined by
continuous extension as stated in Lemma~\ref{thm:AX-X(A+E)=I}.

So~\eqref{eq:2sGRQI-R-spectral-1}-\eqref{eq:2sGRQI-R-spectral-2} define a
continuous function $(K_L,K_R)\mapsto K_{R+}=Z_{R2}Z_{R1}^{-1}$ everywhere in a
neighborhood of $(0,0)$, and from~\eqref{eq:lin-bound}
and~\eqref{eq:quad-bound}, one has
\begin{equation} \label{eq:R-bound}
\|K_{R+}\| \leq c_3\, \|K_L\|\, \|K_R\|^2.
\end{equation}
Applying a similar reasoning to~\eqref{eq:GRQI-L2}, we obtain
\begin{equation} \label{eq:L-bound}
\|K_{L+}\| \leq c_4\, \|K_L\|^2\, \|K_R\|.
\end{equation}
From~\eqref{eq:R-bound}-\eqref{eq:L-bound},
one has
\begin{equation}
\|K_{L+}\|+\|K_{R+}\| \leq c\, \|K_L\|\, \|K_R\| (\|K_L\|+\|K_R\|)
\leq c (\|K_L\|+\|K_R\|)^3
\end{equation}
and this is cubic convergence. 
\end{proof}

\textcolor{black}{Intuitively, the right-hand sides in the 2sGRQI
equations~\eqref{eq:GRQI-LR} are responsible via
equation~\eqref{eq:2sGRQI-R-spectral-2} for the linear convergence that would
be observed with a fixed shift. Cubic convergence is reached via
equation~\eqref{eq:2sGRQI-R-spectral-1} thanks to the particular adaptive
shifts $R_R$ and $R_L$ utilized in the 2sGRQI equations~\eqref{eq:GRQI-LR}.}

}

\section{Comparisons with Newton-based approaches}
\label{sec:comparisons}

It has been long known (see, e.g., Peters and Wilkinson~\cite{PW79})
that the RQI can be viewed as a Newton iteration. In fact, the RQI \emph{is} a
Newton method in a certain differential-geometric
sense~\cite{AMS-book-u}. However, the strict interpretation of RQI as
a Newton method disappears in the block case, as pointed out
in~\cite{AMSV2002-01}, so much so that the Grassmann-RQI can be
considered as distinct from the Newton approach.

Several Newton-based approaches for the general (non-Hermitian)
eigenvalue problem have been proposed in the litterature. In
particular, the well-known Jacobi-Davidson approach can be viewed as a
Newton method within a sequential subspace
algorithm; see, e.g.,~\cite{LE2002,AMS-book-u}. 
Here we discuss specifically the Newton method proposed by
Chatelin~\cite{Cha84} for refining eigenspace estimates. The reasoning
can be explained as follows. An $n\times p$ matrix $Y$ spans an
eigenspace of $C$ if and only if there exists a $p\times p$ matrix $M$
such that
\begin{equation}  \label{eq:CY=YM}
CY=YM.
\end{equation}
However, any subspace admits infinitely many bases, and the solutions
$Y$ of~\eqref{eq:CY=YM} are thus not isolated. A way to remove the
freedom in the choice of basis is to impose on $Y$ a normalization
condition $W^HY=I$ where $W$ is a given full-rank $n\times p$ matrix.
Then~\eqref{eq:CY=YM} becomes
\begin{equation}  \label{eq:F(Y)=0}
F(Y) := CY-Y(W^HCY)=0
\end{equation}
where the unknown $Y$ is normalized by $W^HY=I$. The Newton
iteration for solving~\eqref{eq:F(Y)=0} is given by
\begin{gather}
(I-YW^H)C\Delta - \Delta (W^HCY) = -F(Y),\ W^H\Delta = 0  \label{eq:Newton-eq} \\
Y_+ := Y+\Delta.  \label{eq:pj-update}
\end{gather}

If the basis $Y$ is chosen orthonormal and $W:=Y$,
then~\eqref{eq:Newton-eq} becomes
\begin{equation}  \label{eq:Newton-eq-2}
\Pi C \Pi \Delta - \Delta (Y^HCY) = -\Pi CY,\ Y^H\Delta = 0
\end{equation}
where $\Pi:=I-YY^H$. The resulting algorithm admits an interpretation
as a Newton method on the Grassmann manifold~\cite{AMS-book-u}. 
The rate of convergence is quadratic
in general (cubic when $C$ is Hermitian).

The constraint $Y^H\Delta=0$ can be addressed by setting
$\Delta=Y_\perp K$, where $Y_\perp$ is an orthonormal matrix with
$Y^HY_\perp=0$ and $K$ is an $(n-p)\times p$ matrix; see,
e.g.,~\cite{Dem87}. Then $Y^H\Delta=0$ is trivially satisfied and
equation~\eqref{eq:Newton-eq-2} becomes
\begin{equation}  \label{eq:Demmel-Sylvester}
(Y_\perp^HCY_\perp) K - K (Y^HCY) = -Y_\perp^HCY,
\end{equation}
i.e., a Sylvester equation without constraints on the unknown $K$.  As
pointed out in~\cite{AMSV2002-01}, solving~\eqref{eq:Demmel-Sylvester}
takes $O(n^3)$ operations even when $C$ is condensed (e.g.\
tridiagonal) because $Y_\perp^HCY_\perp$ is a large dense
$(n-p)\times(n-p)$ matrix. However, Lundstr\"om and Eld\'en proposed
an algorithm~\cite[alg.~2]{LE2002} for solving~\eqref{eq:Newton-eq-2}
that does not require the computation of $Y_\perp^HCY_\perp$. It takes
$O(np^2)$ operations to solve~\eqref{eq:Newton-eq-2} when $C$ is block
diagonal of sufficiently moderate block size and $O(n^2p)$ when $C$ is
Hessenberg. The complexity of the 2sGRQI method
(Algorithm~\ref{al:2sGRQI}) is of the same order.


A theoretical comparison between algorithms based on inverse iteration
and on Newton does not reveal that one approach has a clear edge over
the other. Among the advantages of the 2sGRQI method
(Algorithm~\ref{al:2sGRQI}) over Chatelin's method, one can mention that
the convergence of 2sGRQI is cubic instead of quadratic, and that a
pair of left-right eigenspaces is computed instead of just a
right-eigenspace.
On the other hand, Chatelin's method admits a convergence analysis
when the target eigenspace is defective~\cite{Dem87,AMS2004-01}, and
it requires solving only one Sylvester equation instead of two in
2sGRQI. However, we show in
Section~\ref{sec:structured-eigenproblems} that one Sylvester equation
suffices for 2sGRQI on some important structured eigenproblems.


\section{Structured eigenproblems}
\label{sec:structured-eigenproblems}
In this section, we show that the 2sGRQI induces particular
one-sided formulations for some structured eigenproblems.

\subsection{$E$-Hermitian eigenproblem}

Let $C$ be an $n\times n$ matrix. 
\pacomm{(Removed: not sure it is correct when $C$ is defective)
Since $C$ and $C^H$ have the
same spectrum, they are similar, i.e., there exists an invertible
matrix $E$ such that
\[
EC=C^HE,
\]
}
If there exists an invertible matrix $E$ such that 
\begin{equation}  \label{eq:EC=CTE}
EC=C^HE,
\end{equation}
then we say that $C$ is \emph{$E$-Hermitian}. If $C$ is
$E$-Hermitian, then its left and right eigenspaces are related by
the action of $E$. Indeed, let $S$ be a (complex) matrix of
principal vectors of $C$, i.e.,
\[
CS=SD
\]
where $D$ is a (complex) Jordan matrix; then,
from~\eqref{eq:EC=CTE}, one obtains $C^H(ES)=(ES)D$.

The case where $E$ is Hermitian or skew-Hermitian, i.e., $E^H=\pm E$,
is of particular interest because, as we show in the next proposition,
the relation $\calY_L=E\calY_R$ is invariant under the 2sGRQI
(Algorithm~\ref{al:2sGRQI}). Therefore, if $\calY_L=E\calY_R$, it is
not necessary to solve both~\eqref{eq:GRQI-R} and~\eqref{eq:GRQI-L}:
just solve~\eqref{eq:GRQI-R} to get $\calY_{R+}$, and obtain
$\calY_{L+}$ as $\calY_{L+}:=E\calY_{R+}$. Moreover, since the pairs
of left-right eigenspaces of $C$ also satisfy $\calV_L = E\calV_R$,
Theorem~\ref{thm:cubic-2sGRQI-S} also applies.

\begin{prpstn} \label{thm:GRQI-Esym}
Let $E$ be invertible with $E^H=\pm E$ and let $C$ be $E$-Hermitian, i.e.,
$EC=C^HE$. If $Y_L=EY$, $Y_R=Y$, and $Z$ satisfies
\begin{equation} \label{eq:GRQI-Esym}
\boxed{CZ - Z\,(Y^HEY)^{-1}(Y^HECY) = Y,}
\end{equation}
then $Z_L=EZ$ and $Z_R=Z$ satisfy the 2sGRQI
equations~\eqref{eq:GRQI-LR}. Hence, if $\calY_L=E\calY_R$, then
$\calY_{L+} = E\calY_{R+}$. Moreover, the subspace iteration
$\eqcllo{Y}\mapsto \eqcllo{Z}$ defined by~\eqref{eq:GRQI-Esym}
converges locally cubically to the spectral nondefective
right-eigenspaces of $C$.
\end{prpstn}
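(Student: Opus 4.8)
The plan is to verify the three assertions in turn: that the pair $Z_L=EZ$, $Z_R=Z$ solves the coupled 2sGRQI equations~\eqref{eq:GRQI-LR}, that this forces $\calY_{L+}=E\calY_{R+}$, and finally that the reduced one-sided iteration inherits cubic convergence from Theorem~\ref{thm:cubic-2sGRQI-S}. First I would substitute $Y_L=EY$, $Y_R=Y$, $Z_R=Z$ into the right equation~\eqref{eq:GRQI-R}. The block Rayleigh quotient becomes $(Y_L^HY_R)^{-1}Y_L^HCY_R = (Y^HE^HY)^{-1}Y^HE^HCY$, and using $E^H=\pm E$ the two factors of $\pm 1$ cancel, so this equals $(Y^HEY)^{-1}(Y^HECY)$. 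Hence~\eqref{eq:GRQI-R} reduces exactly to the boxed equation~\eqref{eq:GRQI-Esym}, and $Z_R=Z$ is a solution by hypothesis.

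For the left equation~\eqref{eq:GRQI-L}, the cleanest route I see is to start from~\eqref{eq:GRQI-Esym}, left-multiply by $E$, and use the $E$-Hermitian identity $EC=C^HE$ to rewrite $ECZ=C^HEZ$; this gives $C^HEZ - EZ(Y^HEY)^{-1}(Y^HECY)=EY$. Taking the conjugate transpose and repeatedly applying $E^H=\pm E$ together with the identity $Y^HC^HEY=Y^H(EC)Y=Y^HECY$ makes all sign factors cancel, leaving $Z^HEC - (Y^HECY)(Y^HEY)^{-1}Z^HE = Y^HE$. This is precisely~\eqref{eq:GRQI-L} with $Z_L=EZ$, $Y_L=EY$, $Y_R=Y$, so the left equation holds as well. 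The bookkeeping of the $\pm$ signs is the only delicate point here, but since each boxed-equation term picks up the same sign under conjugate transposition, the signs always cancel, which covers the Hermitian and skew-Hermitian cases simultaneously.

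The second assertion is then immediate: by the basis-independence of the 2sGRQI mapping established after Algorithm~\ref{al:2sGRQI}, $\calY_{R+}=\ecl{Z_R}=\ecl{Z}$ and $\calY_{L+}=\ecl{Z_L}=\ecl{EZ}=E\ecl{Z}=E\calY_{R+}$, so the relation $\calY_L=E\calY_R$ is preserved. For the final cubic-convergence claim, I would observe that the subspace map $\ecl{Y}\mapsto\ecl{Z}$ of~\eqref{eq:GRQI-Esym} coincides with the $\calY_R$-component of the 2sGRQI mapping restricted to the invariant manifold $\{\calY_L=E\calY_R\}$. A spectral nondefective right-eigenspace $\calV_R$ of $C$ yields, via the Jordan argument preceding the proposition (applied to $CS=SD$, which gives $C^H(ES)=(ES)D$), a left-eigenspace $\calV_L=E\calV_R$ with the identical spectrum and Jordan structure; hence $(\calV_L,\calV_R)$ is a pair of spectral nondefective left-right eigenspaces in the sense of Definition~\ref{def:eigenspace}, and Theorem~\ref{thm:cubic-2sGRQI-S} applies.

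Finally I would convert the two-subspace bound of Theorem~\ref{thm:cubic-2sGRQI-S} into a one-subspace bound. Because $E$ is a fixed invertible matrix, the map $\calX\mapsto E\calX$ is a diffeomorphism of $\Grasspn$ that is bi-Lipschitz in the principal-angle metric near $\calV_R$, so there is a constant $\kappa$ with $\angle(\calY_L,\calV_L)=\angle(E\calY_R,E\calV_R)\leq\kappa\,\angle(\calY_R,\calV_R)$. Substituting this, together with $\calY_L=E\calY_R$ and $\calV_L=E\calV_R$, into the cubic estimate of Theorem~\ref{thm:cubic-2sGRQI-S} bounds $\angle(\calY_{R+},\calV_R)$ by a constant times $\angle(\calY_R,\calV_R)^3$, which is the asserted cubic convergence. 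The one nonroutine ingredient is this quantitative bi-Lipschitz control of $\calX\mapsto E\calX$ on the Grassmannian, which follows from smoothness of the $\GL$-action and compactness; I therefore expect no genuine difficulty there, and the sign bookkeeping in the first part is the only place demanding real care.
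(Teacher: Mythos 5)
Your proposal is correct and takes essentially the same route as the paper's own proof: direct substitution of $Y_L=EY$, $Z_L=EZ$ into the two Sylvester equations~\eqref{eq:GRQI-R} and~\eqref{eq:GRQI-L} (the paper dismisses the $\pm$-sign bookkeeping as ``easy to check,'' which you carry out explicitly and correctly, since every factor acquires the same sign $\sigma=\pm1$ and $\sigma^3=\sigma$), followed by the observation that the eigenspace pairs satisfy $\calV_L=E\calV_R$ so that Theorem~\ref{thm:cubic-2sGRQI-S} applies. Your quantitative bi-Lipschitz estimate for $\calX\mapsto E\calX$ simply makes explicit the paper's continuity remark that $(E\calY,\calY)$ is close to $(\calV_L,\calV_R)$ whenever $\calY$ is close to $\calV_R$.
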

\begin{proof}
It is easy to check that replacing $Y_R:=Y$, $Z_R:=Z$,
$Y_L:=EY_R$, $Z_L:=EZ_R$ in~\eqref{eq:GRQI-R}
and~\eqref{eq:GRQI-L} yields~\eqref{eq:GRQI-Esym} in both cases.
In order to prove cubic convergence, it is sufficient to notice
that the pairs $(\calV_L,\calV_R)$ of eigenspaces satisfy
$\calV_L=E\calV_R$, as was shown above. Therefore, if $\calY$ is
close to $\calV_R$, then the pair
$(\calY_L,\calY_R):=(E\calY,\calY)$ is close to $(\calV_L,
\calV_R)$ and local cubic convergence to $\calV_R$ follows from
Theorem~\ref{thm:cubic-2sGRQI-S}.
\end{proof}

The discussion in Section~\ref{sec:practical-implementation} on
solving Sylvester equations applies likewise
to~\eqref{eq:GRQI-Esym}.

\subsubsection{Generalized Hermitian eigenproblem}

Using Proposition~\ref{thm:GRQI-Esym}, we show that the 2sGRQI
yields a Grassmannian RQI for the Hermitian generalized eigenproblem
$A\calV \subset B\calV$ which does not involve an explicit computation
of $B^{-1}A$.
Let $A$ and $B$ be two Hermitian $n$-by-$n$ matrices with $B$
invertible. Consider the problem of finding a $p$-dimensional
subspace $\calV$ such that $A\calV \subset B\calV$. Let
$V\in\cc^{n\times p}$ be a basis for $\calV$, then $A\calV \subset
B\calV$ if and only if there is a matrix $M$ such that $AV=BVM$.
Equivalently, $V$ spans a right-eigenspace of $B^{-1}A$, i.e.,
\[
B^{-1}AV = VM.
\]
The problem is thus to find a right-eigenspace of $C:=B^{-1} A$. The
conditions in Proposition~\ref{thm:GRQI-Esym} are satisfied with $E:=B$. The
modified GRQI equation~\eqref{eq:GRQI-Esym} becomes
\begin{equation} \label{eq:GRQI-gen}
\boxed{
AZ - BZ\,(Y^HBY)^{-1}(Y^HAY) = BY}
\end{equation}
and the subspace iteration $\eqcllo{Y}\mapsto \eqcllo{Z}$ converges
locally cubically to the spectral nondefective
eigenspaces of $B^{-1}A$. In particular, $B^{-1}A$ is nondefective
when $A$ or $B$ is positive definite.

\subsubsection{Skew-Hamiltonian eigenproblem}

Let $T$ be a skew-Hamiltonian matrix, i.e., $(TJ)^H =
-TJ$, where $J=\left(\begin{smallmatrix} 0 & I \\ -I & 0
\end{smallmatrix}\right)$, see e.g.~\cite{BBMX2002}. Equivalently,
$JT=T^HJ$, i.e., $T$ is $J$-Hermitian. Conditions in
Proposition~\ref{thm:GRQI-Esym} are satisfied with $C:=T$ and $E:=J$. The
modified GRQI equation~\eqref{eq:GRQI-Esym} becomes
\begin{equation} \label{eq:GRQI-skew-Ham}
\boxed{TZ - Z\,(Y^HJY)^{-1} (Y^HJTY) = Y}
\end{equation}
and the subspace iteration $\eqcllo{Y}\mapsto \eqcllo{Z}$ converges
locally cubically to the spectral nondefective
right-eigenspaces of $T$.

\subsection{$E$-skew-Hermitian eigenproblem}
\label{sec:EC=-CTE}

Let $E$ be an invertible $n\times n$ matrix and let $C$ be an
\emph{$E$-skew-Hermitian} $n\times n$ matrix, namely
\begin{equation} \label{eq:EC=-CHE}
EC=-C^HE.
\end{equation}
We saw in the previous section that the corresponding left and right
eigenspaces of $E$-Hermitian matrices are related by a multiplication by
$E$. The case of $E$-skew-Hermitian matrices is slightly different. 
\begin{prpstn}
  Let $C$ be an $E$-skew-Hermitian matrix. Then the spectrum of $C$ is
  symmetric with respect to the imaginary axis. In other words, if
  $\lambda$ is an eigenvalue of $C$, then so is $-\overline{\lambda}$.
  Moreover, if $\calV_L$ and $\calV_R$ are left and right eigenspaces
  of $C$ whose spectra are the symmetric image one of the other
  with respect to the imaginary axis, then $\calV_L = E\calV_R$.
\end{prpstn}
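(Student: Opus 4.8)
The plan is to treat the two assertions separately, deriving both from the single identity~\eqref{eq:EC=-CHE}, namely $EC=-C^HE$, with $E$ invertible.

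For the spectral symmetry, I would first rewrite~\eqref{eq:EC=-CHE} as $C=-E^{-1}C^HE=E^{-1}(-C^H)E$, which exhibits $C$ as similar to $-C^H$. Similar matrices share their spectrum, so $\sigma(C)=\sigma(-C^H)$. Since $\sigma(C^H)=\overline{\sigma(C)}$, we have $\sigma(-C^H)=\{-\bar\lambda:\lambda\in\sigma(C)\}$, and therefore $\sigma(C)=\{-\bar\lambda:\lambda\in\sigma(C)\}$. This says precisely that $\lambda\in\sigma(C)$ implies $-\bar\lambda\in\sigma(C)$, i.e.\ the spectrum is invariant under reflection across the imaginary axis (the map $\lambda\mapsto-\bar\lambda$ fixes purely imaginary points). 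This part is routine and uses only invertibility of $E$.

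For the subspace relation I would mimic the principal-vector argument already used above for the $E$-Hermitian case. Let $CS=SD$ with $D$ a Jordan matrix of $C$, so the columns of $S$ are principal vectors. Premultiplying by $E$ and invoking~\eqref{eq:EC=-CHE} gives $ESD=ECS=-C^HES$, that is $C^H(ES)=(ES)(-D)$. Hence the columns of $ES$ are principal vectors of $C^H$ with Jordan matrix $-D$: a column $s$ of $S$ with $Cs=\lambda s$ is carried by $E$ to a column $Es$ with $C^H(Es)=-\lambda(Es)$. Because a left eigenvector $y$ of $C$ for eigenvalue $\mu$ (that is, $y^HC=\mu y^H$) is exactly an eigenvector of $C^H$ for $\bar\mu$, the vector $Es$ is a left eigenvector of $C$ for eigenvalue $-\bar\lambda$. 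Selecting the columns $S_1$ of $S$ whose eigenvalues form the spectrum $\Sigma$ of $\calV_R$, the subspace $E\calV_R=\ecl{ES_1}$ is therefore a left eigenspace of $C$ with spectrum $\{-\bar\lambda:\lambda\in\Sigma\}$, precisely the reflection of $\Sigma$ across the imaginary axis.

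It then remains to identify $E\calV_R$ with the given $\calV_L$. By hypothesis $\calV_L$ is a left eigenspace of $C$ whose spectrum is the imaginary-axis reflection of $\Sigma$, i.e.\ the same set $\{-\bar\lambda:\lambda\in\Sigma\}$ just obtained for $E\calV_R$; equivalently both are $C^H$-invariant subspaces with $C^H$-spectrum $-\Sigma$. Since $E$ is invertible we also have $\dim E\calV_R=\dim\calV_R=p=\dim\calV_L$. Assuming, as in the surrounding development (Definition~\ref{def:eigenspace}, Proposition~\ref{thm:S}), that the eigenspaces are spectral—so that $-\Sigma$ is disjoint from the complementary spectrum of $C^H$—the spectral invariant subspace of $C^H$ with spectrum $-\Sigma$ is unique, whence $\calV_L=E\calV_R$.

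The main obstacle here is not any hard estimate but the careful bookkeeping of the two spectral operations that $E$ composes: complex conjugation (from passing to $C^H$) and negation (from the skew sign in~\eqref{eq:EC=-CHE}). One must also keep straight the convention, already implicit in Proposition~\ref{thm:S}, that the spectrum of a left eigenspace is read through $y^HC=\mu y^H$ rather than through $C^H$ directly. Making the composite reflection $\lambda\mapsto-\bar\lambda$ line up exactly with the hypothesis ``symmetric image with respect to the imaginary axis'' is what makes the identification $\calV_L=E\calV_R$ fall out cleanly.
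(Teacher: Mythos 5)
Your proof is correct and follows essentially the same route as the paper: the core of both arguments is the identity $C^H(ES)=(ES)(-D)$ obtained by premultiplying $CS=SD$ by $E$ and invoking $EC=-C^HE$, from which the spectral symmetry and the statement about $E\calV_R$ both fall out (your similarity observation $C=E^{-1}(-C^H)E$ is just a repackaging of the same computation for the first assertion). One remark worth making: the paper's proof actually stops after showing that $E\calV_R$ is a left eigenspace with spectrum $\{-\overline{\lambda}:\lambda\in\Sigma\}$ and never carries out the identification $\calV_L=E\calV_R$; your final uniqueness step --- that a spectral invariant subspace of $C^H$ with prescribed spectrum $-\Sigma$ disjoint from the complementary spectrum is unique --- fills this in, and the spectrality hypothesis you import is genuinely needed (for, say, $C=iI$ and $E=I$ the "Moreover" claim fails for nonspectral eigenspaces), which is consistent with the paper's subsequent discussion restricting to spectral right-eigenspaces. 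So your write-up is, if anything, slightly more complete than the paper's own proof.
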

\begin{proof}
Letting $S$ be an invertible matrix of principal vectors of $C$, i.e.,
\begin{equation} \label{eq:CS=SD}
CS=SD
\end{equation}
where $D$ is a Jordan matrix,~\eqref{eq:EC=-CHE} yields
\begin{equation} \label{eq:CHES=-ESD}
C^HES = ES(-D).
\end{equation}
Hence, the matrix $-D$ is a Jordan matrix of $C^H$. Therefore, if
$\lambda$ is an eigenvalue of $C$, then $-\lambda$ is an eigenvalue of
$C^H$, and thus $-\overline{\lambda}$ is an eigenvalue of $C$. Moreover,
equations~\eqref{eq:CS=SD} and~\eqref{eq:CHES=-ESD} show that if
$\calV$ is a right-eigenspace of $C$ with eigenvalues
$\lambda_{i_1},\ldots,\lambda_{i_p}$, then $E\calV$ is a
left-eigenspace of $C$ with eigenvalues
$-\overline{\lambda}_{i_1},\ldots,-\overline{\lambda}_{i_p}$. 
\end{proof}

Consequently, letting $\calV$ be a spectral right-eigenspace of $C$,
we have that $(E\calV,\calV)$ forms a pair of spectral left-right
eigenspaces of $C$ if and only if the spectrum of $\calV$ is symmetric
with respect to the imaginary axis. We call such an
invariant subspace $\calV$ a \emph{full eigenspace} of the
$E$-skew-Hermitian matrix $C$.

If $E$ is Hermitian or skew-Hermitian, then the relation
$\calY_L=E\calY_R$ is invariant by the 2sGRQI
(Algorithm~\ref{al:2sGRQI}), as we show in the forthcoming
proposition. Therefore, if $\calY_L=E\calY_R$, it is sufficient to
solve~\eqref{eq:GRQI-R} only, and then compute
$\calY_{L+}:=E\calY_{R+}$. Moreover, the 2sGRQI iteration restricted
to the pairs $(\calY_L,\calY_R)=(E\calY,\calY)$ converges locally
cubically to the full nondefective eigenspaces of $C$.

\begin{prpstn}  \label{thm:GRQI-Eskew}
Let $E$ be invertible with $E^H=\pm E$ and let $C$ be $E$-skew-Hermitian, i.e.,
$EC=-C^HE$. If $Y_L=EY$ and $Y_R=Y$, then $Z_L=-EZ$ and $Z_R=Z$ satisfy the
2sGRQI equations~\eqref{eq:GRQI-LR} with
\begin{equation} \label{eq:GRQI-Eskew}
\boxed{CZ- Z\,(Y^HEY)^{-1} (Y^HECY) = Y.}
\end{equation}
Therefore, if $\calY_L=E\calY_R$, then $\calY_{L+} = E\calY_{R+}$. \\
Moreover, let $\calV$ be a \emph{full} nondefective right-eigenspace
of $C$ (which means that the eigenvalues of $C|_\calV$ have the same
multiplicity as in $C$, the spectrum of $C|_\calV$ is symmetric with
respect to the imaginary axis, and $C|_\calV$ is nondefective). Then
the subspace iteration $\eqcllo{Y}\mapsto \eqcllo{Z}$ defined
by~\eqref{eq:GRQI-Esym} converges locally cubically to $\calV$.
\end{prpstn}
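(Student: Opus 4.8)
The plan is to follow the template of the proof of Proposition~\ref{thm:GRQI-Esym} for the $E$-Hermitian case, the only genuinely new ingredient being the sign introduced by the skew relation~\eqref{eq:EC=-CHE}. I would organize the argument in three stages: (i) an algebraic check that the proposed substitution solves the 2sGRQI equations, (ii) reading off the span identity $\calY_{L+}=E\calY_{R+}$, and (iii) deducing local cubic convergence from Theorem~\ref{thm:cubic-2sGRQI-S}.

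For stage (i), I would substitute $Y_R=Y$, $Y_L=EY$, $Z_R=Z$, $Z_L=-EZ$ into~\eqref{eq:GRQI-R}--\eqref{eq:GRQI-L} and show each reduces to~\eqref{eq:GRQI-Eskew}. The right equation~\eqref{eq:GRQI-R} is immediate: with $Y_L^H=Y^HE^H$ the small shift becomes $(Y^HE^HY)^{-1}(Y^HE^HCY)$, and since $E^H=\pm E$ the common factor $\pm1$ appears once in each of the two factors and cancels, leaving exactly $(Y^HEY)^{-1}(Y^HECY)$. The left equation~\eqref{eq:GRQI-L} is where I expect essentially all of the work. Rather than substitute blindly, I would take the conjugate transpose of~\eqref{eq:GRQI-Eskew}, obtaining $Z^HC^H-Y^HC^HE^HY(Y^HE^HY)^{-1}Z^H=Y^H$, multiply on the right by $E^H$, and then eliminate $C^HE^H$ via the identity $C^HE^H=-E^HC$ (which is just the conjugate transpose of $EC=-C^HE$ and needs no assumption on $E^H$). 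A short rearrangement turns this into precisely~\eqref{eq:GRQI-L} with $Z_L^H=-Z^HE^H$, i.e.\ $Z_L=-EZ$.

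I would flag here that the main subtlety, and the point of departure from the Hermitian case, is the minus sign: the choice $Z_L=+EZ$ would produce $-Y^HE^H=Y^HE^H$ and fail, whereas $Z_L=-EZ$ works. Once (i) is in hand, stage (ii) is immediate: since $E$ is invertible, $\ecl{Z_L}=\ecl{-EZ}=E\,\ecl{Z}=E\,\ecl{Z_R}$, so $\calY_{L+}=E\calY_{R+}$; the sign is irrelevant at the level of spans, so the one-sided update $\ecl{Y}\mapsto\ecl{Z}$ defined by~\eqref{eq:GRQI-Eskew} is well defined and agrees with the right component of the full 2sGRQI applied to $(E\calY,\calY)$.

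For stage (iii), I would invoke the discussion preceding the proposition: a \emph{full} nondefective right-eigenspace $\calV$ makes $(\calV_L,\calV_R):=(E\calV,\calV)$ a pair of spectral nondefective left-right eigenspaces, so Theorem~\ref{thm:cubic-2sGRQI-S} gives a neighborhood $\calN$ and a constant with $\angle(E\calY_+,E\calV)+\angle(\calY_+,\calV)\le c(\angle(E\calY,E\calV)+\angle(\calY,\calV))^3$ whenever $(E\calY,\calY)\in\calN$. It remains to transfer this to the one-sided iterate $\calY_+=\ecl{Z}$. Since $\calY\mapsto E\calY$ is a smooth diffeomorphism of $\Grasspn$, it is locally bi-Lipschitz near $\calV$ (with constants governed by the conditioning of $E$), so $\angle(E\calY,E\calV)\le c_2\,\angle(\calY,\calV)$ there; combining this with $\angle(\calY_+,\calV)\le\angle(E\calY_+,E\calV)+\angle(\calY_+,\calV)$ yields $\angle(\calY_+,\calV)\le c(1+c_2)^3\,\angle(\calY,\calV)^3$. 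The only obstacle beyond the sign bookkeeping is making this bi-Lipschitz comparison precise and checking that $(E\calY,\calY)\in\calN$ for $\calY$ near $\calV$, both of which are routine since $E$ is a fixed invertible matrix.
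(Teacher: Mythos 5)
Your proposal is correct and follows essentially the same route as the paper's proof: verify by substitution that $(Y_L,Y_R,Z_L,Z_R)=(EY,Y,-EZ,Z)$ collapses both 2sGRQI equations~\eqref{eq:GRQI-LR} to~\eqref{eq:GRQI-Eskew}, read off $\calY_{L+}=E\calY_{R+}$ from invertibility of $E$, and obtain cubic convergence from Theorem~\ref{thm:cubic-2sGRQI-S} applied to the spectral pair $(E\calV,\calV)$, which is spectral precisely because $\calV$ is \emph{full}. The paper compresses your stage (i) into ``it is easy to check'' and silently absorbs your bi-Lipschitz transfer of angles under $\calY\mapsto E\calY$ into the constant, so your write-up is simply a more detailed rendering of the same argument, including the correct observation that the sign $Z_L=-EZ$ (rather than $+EZ$) is forced by $C^HE^H=-E^HC$.
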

Note that this proposition differs from
Proposition~\ref{thm:GRQI-Esym} in two points: $Z_L = -EZ$ and the
specification that $\calV$ must be full.

\begin{proof}
It is easy to check that replacing $Y_R:=Y$, $Z_R:=Z$, $Y_L:=EY_R$,
$Z_L:=-EZ_R$ in~\eqref{eq:GRQI-R} and~\eqref{eq:GRQI-L}
yields~\eqref{eq:GRQI-Eskew} in both cases. In order to prove cubic
convergence, it is sufficient to notice that the pairs
$(\calV_L,\calV_R)$ of full nondefective left-right eigenspaces satisfy
$\calV_L=E\calV_R$, as was shown above. Therefore, if $\calY$ is close
to $\calV_R$, then the pair $(\calY_L,\calY_R):=(E\calY,\calY)$ is close
to $(\calV_L, \calV_R)$ and local cubic convergence to $\calV$ follows
from Theorem~\ref{thm:cubic-2sGRQI-S}.
\end{proof}

\subsubsection{Skew-Hermitian eigenproblem}

Let $\Omega$ be skew-Hermitian. Then we have $EC=-C^HE$ with $C:=\Omega$
and $E:=I$. The modified GRQI equation~\eqref{eq:GRQI-Eskew} becomes
\begin{equation} \label{eq:GRQI-skew}
\boxed{
\Omega Z-Z\,(Y^HY)^{-1}(Y^H\Omega Y)=Y.}
\end{equation}
This is simply the classical GRQI equation~\eqref{eq:GRQI}. This is not
surprising as skew-Hermitian matrices are normal matrices.

\subsubsection{Hamiltonian eigenproblem}

Let $H$ be Hamiltonian, i.e., $(HJ)^H = HJ$, where
$J=\left(\begin{smallmatrix} 0 & I \\ -I & 0
\end{smallmatrix}\right)$. This is equivalent to $J H = -H^H
J$. Thus we have $EC=-C^HE$ with $C:=H$ and $E:=J$, and the modified GRQI
equation~\eqref{eq:GRQI-Eskew} reads
\begin{equation} \label{eq:GRQI-Ham}
\boxed{HZ-Z\,(Y^HJY)^{-1}(Y^HJHY)=Y.}
\end{equation}
Proposition~\ref{thm:GRQI-Eskew} implies that the subspace iteration with
iteration mapping $\eqcllo{Y}\mapsto\eqcllo{Z}$ defined by~\eqref{eq:GRQI-Ham}
converges locally cubically to the full nondefective right-eigenspaces of $H$.

%

%
%

\pacomm{(I remove this whole section. I don't think it is
  interesting.)
\subsection{Normal eigenproblem}
\label{sec:normal}

The (real) matrix $C$ is \emph{normal} if $CC^H=C^HC$. A real matrix is
normal if and only if it is quasi-diagonalizable by an orthogonal
matrix. In particular, Hermitian, skew-Hermitian and orthogonal matrices
are normal.

When $C$ is Hermitian, the relation $\calY_L=\calY_R$ is evidently
invariant by the 2sGRQI mapping, since the two
equations~\eqref{eq:GRQI-LR} are then identical. This property
does not extend to normal non-Hermitian matrices. Nevertheless, if
$C$ is normal and $\calY_L=\calY_R$, then $\dist(\calY_{R+},\calV)
= O(\dist^3(\calY_R,\calV))$ for all spectral nondefective
eigenspace $\calV$ of $C$. To show this, note that in the proof of
cubic convergence (Theorem~\ref{thm:cubic-2sGRQI}), the matrix $S$
can be taken orthogonal since $C$ is normal. Therefore,
$S^{-H}=S$, so $Y_L=Y_R$ implies $K_R=K_L$. The result follows
from~\eqref{eq:R-bound}. Now making $Y_L=Y_R=:Y$
in~\eqref{eq:GRQI-LR2} yields the GRQI equation~\eqref{eq:GRQI}.
This proves that the cubic local convergence property of the GRQI
(Algorithm~\ref{al:GRQI}) to the spectral (trivially nondefective)
eigenspaces of $A$ is also valid when $A$ is normal.
}

\subsection{The generalized eigenvalue problem}

We briefly discuss the application of the 2sGRQI concept to the
generalized eigenvalue problem. Let $A,B\in\cc^{n\times n}$. The
generalized eigenvalue problem consists in finding the nontrivial
solutions of the equation $Ax=\lambda Bx$. Corresponding to the notion
of invariant subspace for a single matrix, we have the notion of a
\emph{deflating subspace}, see e.g.~\cite{Ste73,GV96}. The
$p$-dimensional subspace $\calX$ is deflating for the pencil
$A-\lambda B$ if there exists a $p$-dimensional subspace $\calY$ such
that
\begin{equation}  \label{eq:deflating-subspace}
A\calX, B\calX \ \subset \ \calY.
\end{equation}
Here we suppose that the pencil $A-\lambda B$ is nondegenerate,
i.e., $\det(A-\lambda B)$ is not trivially zero. Then there exists
$\alpha$ and $\beta$ such that $\hat B := \alpha B - \beta A$ is
invertible. Now take $\gamma$, $\delta$ such that
$\alpha\delta-\gamma\beta \neq 0$ and let $\hat A := \gamma B -
\delta A$.  Then~\eqref{eq:deflating-subspace} is equivalent to
\begin{gather*}
\hat B^{-1} \hat A \calX \subset \calX \\
\hat B \calX = \calY,
\end{gather*}
i.e., $\calX$ is an invariant subspace of $\hat B^{-1} \hat A$.
Replacing this expression for $C$ in~\eqref{eq:GRQI-LR}, one
obtains after some manipulations
\begin{subequations}  \label{eq:2sGRQI-pencil}
\begin{gather}
\hat A Z_R \hat Y_L^H \hat B Y_R - \hat B Z_R \hat Y_L^H \hat A
Y_R = \hat B Y_R
\\ \hat A^H \hat Z_L Y_R^H \hat B^H \hat Y_L -
\hat B^H \hat Z_L Y_R^H \hat A^H \hat Y_L = \hat B^H \hat Y_L
\end{gather}
\end{subequations}
where $\hat Y_L := \hat B^{-H} Y_L$ and $\hat Z_L := \hat B^{-H}
Z_L$. It yields an iteration for which $Y_R$ and $\hat Y_L$
locally cubically converge to pairs of left-right deflating
subspaces of the pencil $A-\lambda B$. Note that if $B$ is
invertible then we can choose $\hat B:=B$ and $\hat A:=A$.

\section{Numerical experiments}
\label{sec:numerical-experiments}


We report on numerical experiments that illustrate the potential of
the 2sGRQI method (Algorithm~\ref{al:2sGRQI}) as a numerical
algorithm. The 2sGRQI method has been implemented in Matlab as
described below.

\begin{lgrthm}[implementation of 2sGRQI]
\label{al:num-2sGRQI}
Let $C$ be an $n\times n$ matrix. Given two $n\times p$ matrices
$Y_{L_0}$ and $Y_{R_0}$ satisfying
$Y_{L_0}^HY_{L_0}=I=Y_{R_0}^HY_{R_0}$, the algorithm produces a
sequence of matrices $(Y_{L_k},Y_{R_k})$ as follows. For
$k=0,1,2,\ldots$, \\ 1. Compute the $p\times p$ block Rayleigh
quotient $R_R:=(Y_{L_k}^HY_{R_k})^{-1} Y_{L_k}^HCY_{R_k}$. Compute an
eigendecomposition $R_R = W_R \diag(\rho_1,\ldots,\rho_p) W_R^{-1}$
using the Matlab {\tt eig} function. Obtain the eigendecomposition
$R_L = W_L^H \diag(\rho_1,\ldots,\rho_p) W_L^{-H}$ by computing
$W_L:=(Y_{L_k}^HY_{R_k}) W_R$.  \\ 2. Solve the decoupled
equations~\eqref{eq:GRQI-LR}, that is, $(C-\rho_i I) (z_R)_i =
Y_{R_k}W_R e_i$ and $(C^H-\rho_iI)(z_L)_i = Y_{L_k}W_L^{-H}e_i$,
$i=1,\ldots,p$, using the Matlab ``{\tt $\backslash$}'' operator. If
the solutions have any nonfinite element, then solve instead
$(C-\rho_i I+\epsilon I) (z_R)_i = Y_{R_k}W_R e_i$ and
$(C^H-\rho_iI+\epsilon I)(z_L)_i = Y_{L_k}W_L^{-H}e_i$,
$i=1,\ldots,p$, with $\epsilon$ small (we took $\epsilon=
10^3\mathbf{u}\|C\|_F$ where $\mathbf{u}$ is the floating point
relative accuracy and $\|C\|_F$ is the Frobenius norm of $C$).
\pacomm{(Check)} \\ 3.  Orthonormalize $Z_R:=\begin{bmatrix}(z_R)_1 &
  \cdots & (z_R)_i\end{bmatrix}$ to obtain $Y_{R_{k+1}}$, and likewise
for $Z_R$ to obtain $Y_{L_{k+1}}$.  In Matlab, orthonormalizations are
performed using the ``economy size'' QR decomposition, {\tt
  [YL,ignore] = qr(ZL,0)} and {\tt [YR,ignore] = qr(ZR,0)}.
\end{lgrthm}
Note that if $C$, $Y_{L_0}$ and $Y_{R_0}$ are real, then the columns
of $Z_L$ and $Z_R$ appear in complex conjugate pairs and unnecessary
work can thus be avoided in the computation of $Z_L$ and $Z_R$.

\pacomm{(This is an old version, with Schur form and back
substitution. We notice that the large numerical errors in the norm of
the columns of $Z$ first computed pollute the computation of the
subsequent columns of $Z$. It is better to decouple the equations by
diagonalizing the small block Rayleigh quotient $R_R$.)
\begin{lgrthm}[implementation of 2sGRQI]
\label{al:num-2sGRQI-old}
Let $C$ be an $n\times n$ matrix. Given two $n\times p$ matrices
$Y_{L_0}$ and $Y_{R_0}$ satisfying
$Y_{L_0}^HY_{L_0}=I=Y_{R_0}^HY_{R_0}$, the algorithm produces a sequence
of matrices $(Y_{L_k},Y_{R_k})$ as follows. For $k=1,2,\ldots$, \\
1. Apply the real QZ algorithm on the pair
$(Y_{R_k}^HC^HY_{L_k},Y_{R_k}^HY_{L_k})$ to obtain the pair $(T,R)$
where $T$ is quasi-upper-triangular and $R$ is upper-triangular. In
Matlab, {\tt [T,R,Q,U] \linebreak[4]= qz(YL'*C'*YL,YR'*YL,'real')}. \\ 2. Solve
the equations $CZ_R R^H - Z_R T^H = Y_RQ^H$ and $C^H Z_L R - Z_L T =
Y_LU$ by successive substitution as explained in
Section~\ref{sec:practical-implementation}. The $n\times n$ and $2n\times
2n$ linear systems are solved in Matlab using the operator {\tt
$\backslash$}. \\ 3. Orthonormalize $Z_L$ and $Z_R$ to obtain
$Y_{L_{k+1}}$ and $Y_{R_{k+1}}$. In Matlab, {\tt [YL,dummy] = qr(ZL,0)}
and {\tt [YR,dummy] = qr(ZR,0)}.
\end{lgrthm}
}
 
It is well known~\cite{BS89} that the basins of attraction of RQI
(Algorithm~\ref{al:RQI}) may collapse around attractors when the
eigenvalues of $A$ are not well separated. This property also holds
for GRQI~\cite{ASVM2004-01} and obviously extends to 2sGRQI
(Algorithm~\ref{al:2sGRQI}). Moreover, in 2sGRQI the matrix $C$ is not
necessarily Hermitian; its eigenspaces can thus be arbitrarily close
to each other. In a first set of experiments, in order to ensure a
reasonably large basin of attraction around the left-right
eigenspaces, we ruled out clustered eigenvalues and ill-separated
eigenvectors by choosing $C$ as follows: $C=SDS^{-1}$, where $D$ is a
diagonal matrix whose diagonal elements are random permutations of
$1,\ldots,n$ and $S = I+\tfrac{\alpha}{\|E\|_2}E$, where the elements
of $E$ are observations of independent random variables with standard
normal distribution and $\alpha$ is chosen from the uniform
distribution on the interval $(0,0.1)$. The initial matrices $Y_{L_0}$
and $Y_{R_0}$ are randomly chosen such that
$\dist(\ecl{Y_{R_0}},\ecl{S(:,1:p)})<0.1$ and
$\dist(\ecl{Y_{L_0}},\ecl{S^{-H}(:,1:p)})<0.1$, where ``$\dist$'' is
the largest principal angle.

Algorithm~\ref{al:num-2sGRQI} was run $10^6$ times with $n=20$, $p=5$.
The matrices $C$, $Y_{L_0}$, and $Y_{R_0}$ were randomly chosen in
each experiment as explained above. Experiments were run using Matlab
7.2 with floating point relative accuracy approximately equal to
$2\cdot 10^{-16}$.  Results are summarized in Table~\ref{tab:conv},
where the error $e$ is defined as the largest principal angle between
$\ecl{Y_R}$ and $\ecl{S(:,1:p)}$ plus the largest principal angle
between $\ecl{Y_L}$ and $\ecl{S^{-H}(:,1:p)}$. These results show that
convergence to the target eigenspace occurred in each of the $10^6$
runs. The evolution of the error is compatible with cubic order of
convergence.

\begin{table}[ht]
\begin{center}
\begin{tabular}{|c|c|c|}
\hline
Iterate number & mean(log10(e)) & max(log10(e)) \\ \hline
0 &  -1.4338 &  -1.0000\\
1 &  -4.6531 &  -2.6338\\
2 & -13.9359 &  -8.3053\\
3 & -16.5507 & -15.1861\\
4 & -16.5524 & -15.1651\\
5 & -16.5509 & -15.1691\\ \hline
\end{tabular}
\end{center}
\caption{Numerical experiments for Algorithm~\ref{al:num-2sGRQI}. See
details in the text.}
\label{tab:conv}
\end{table}

\pacomm{(The table below corresponds to experiments where I was
solving the 2sGRQI equations using a Schur form and back
substitution, if I am not mistaken. This would explain the rather large
worse case error (see right-hand column). The error would be smaller
with diagonalization of the small matrix, instead of triangularization.)
\begin{table}[ht]
\begin{center}
\begin{tabular}{|c|c|c|}
\hline
Iterate number & mean(log10(e)) & max(log10(e)) \\ \hline
0 & -1.0482 & -0.6996 \\
1 & -4.2584 & -2.4413 \\
2 & -13.5275 & -7.9426 \\
3 & -16.2589 & -11.0974 \\
4 & -16.2572 & -11.5222 \\
5 & -16.2567 & -11.3846  \\ \hline
\end{tabular}
\end{center}
\caption{Numerical experiments for Algorithm~\ref{al:num-2sGRQI}. See
details in the text.}
\label{tab:conv-old}
\end{table}
}

The behavior of the 2sGRQI algorithm in case of ill-separated
eigenvectors/values would deserve investigation. The Hermitian case is
studied in~\cite{ASVM2004-01} where improvements of GRQI and the
Riemannian Newton algorithm are proposed.

In another set of experiments, real Hamiltonian matrices $C$ were selected
randomly as 
\[
C = \begin{bmatrix} F & \tilde G + \tilde G^H \\ \tilde H + \tilde H^H
  & -F^H \end{bmatrix} 
\]
where $F$, $\tilde G$ and $\tilde H$ are matrices of dimension
$\tfrac{n}{2}\times\tfrac{n}{2}$ whose elements are independent
observations of the standard normally distributed random variable. A
new matrix $C$ was selected for each experiment. For testing purposes,
an eigenvalue decomposition $C=SDS^{-1}$ was computed using the Matlab
{\tt eig} function, and the \emph{full} left and right real
eigenspaces corresponding to the eigenvalues with largest real part in
magnitude were chosen as the target left and right eigenspaces. (The
notion of \emph{full} eigenspace is defined in
Section~\ref{sec:EC=-CTE}. The \emph{real} eigenspace associated to a
pair $(\lambda,\overline{\lambda})$ of complex conjugate eigenvalues
with eigenvectors $v_r+iv_i$ and $v_r-iv_i$ is the span of $v_r$ and
$v_i$.) The eigenvalue decomposition was ordered in such a way that
$\ecl{S^{-H}(:,1:p})$ is the target left-eigspace and $\ecl{S(:,1:p)}$
is the target right-eigenspace. Note that we have $p=2$ when the
target eigenvalues are real ($\lambda$ and $-\lambda$), or $p=4$ when
the target eigenvalues have a nonzero imaginary part ($\lambda$,
$\overline{\lambda}$, $-\lambda$, and $-\overline{\lambda}$). The
initial matrix $Y_{R_0}$ was randomly chosen such that
$\dist(\ecl{Y_{R_0}},\ecl{S(:,1:p)})<0.1$, where ``$\dist$'' is the
largest principal angle, and $Y_{L_0}$ was chosen as $JY_{R_0}$ in
accordance with the material of Section~\ref{sec:EC=-CTE}.
Convergence to the target left and right eigenspaces was declared when
the error $e$ as defined above was smaller than $10^{-12}$ at the 10th
iterate.  Algorithm~\ref{al:num-2sGRQI} was run $10^6$ times with
$n=20$ and $p=4$ with the matrices $C$, $Y_{L_0}$ and $Y_{R_0}$
randomly chosen in each experiment as described above. Note that, in
accordance with the material in Section~\ref{sec:EC=-CTE}, only $Z_R$
was computed at each iteration; $Z_L$ was chosen as $JZ_R$. We
observed that convergence to the target eigenspaces was declared for
$99.95\%$ of the $10^6$ experiments.
Next, the experiment was run $10^6$ times with the distance bound on
the initial condition set to $0.001$ instead of $0.1$. Convergence to
the target eigenspaces was declared for all but seven of the $10^6$
randomly generated experiments. This confirms the potential of
Algorithm~\ref{al:num-2sGRQI} for refining initial estimates of
\emph{full eigenspaces} of Hamiltonian matrices.

\section{Conclusion}
\label{sec:concl}

We have shown that Ostrowski's two-sided iteration generalizes to
an iteration on $\Grasspn\times\Grasspn$ that converges locally
cubically to the pairs of spectral nondefective left-right
eigenspaces of arbitrary square matrices. The iteration is competitive with
Chatelin's Newton method and it yields one-sided formulations adapted
to some structured eigenproblems, including the Hamiltonian and
generalized Hermitian eigenproblems.

\pacomm{(I remove this.) 
In this paper, all matrices used in the iterations were real even
though we used complex matrices in some of the proofs. When going
to complex arithmetic everything becomes much simpler. The QZ
algorithm gives triangular matrices, the solution of the Sylvester
equations are simpler recurrences but implying complex vectors and
systems of equations.
}

\section*{Acknowledgements}
This work was initiated when the first author was a guest in
the Mathematics Department of the University of W\"urzburg under a grant
from the European Nonlinear Control Network. The hospitality of the
members of the Department is gratefully acknowledged. The first author
would also like to thank especially Rodolphe Sepulchre for his careful
guidance throughout his PhD research and the impact this had on the
results of this paper.

\bibliographystyle{amsalpha_compact}
\bibliography{pabib}

\end{document}